\documentclass[11pt]{amsart}
\usepackage{amsmath,amsfonts,amssymb,mathrsfs}
\usepackage{amssymb,mathrsfs,graphicx,extpfeil}
\usepackage{amsmath,amsfonts,amssymb,amscd,amsthm,bbm}

\usepackage{epsfig}
\usepackage{indentfirst, latexsym, amssymb, enumerate,amsmath,graphicx}
\usepackage{float}
\usepackage{epsfig}

\usepackage{caption}
\usepackage{subcaption}
\usepackage{extpfeil}
\usepackage{graphicx,colortbl}
\usepackage{epsfig}
\usepackage{caption}
\usepackage{bm}
\usepackage{tikz}
\usetikzlibrary{matrix,shapes,arrows,positioning,chains}

\usepackage{amsmath}
\topmargin-0.1in \textwidth6.in \textheight8.5in \oddsidemargin0in
\evensidemargin0in

\title{On a generalized Kuramoto model with relativistic effects and emergent dynamics}

\author[C. Min]{Chan Ho Min}
\address[C. Min]{\newline Department of Financial engineering \newline Ajou University, Suwon 16499, Republic of Korea}
\email{chanhomin@ajou.ac.kr}

\author[Ahn]{Hyunjin Ahn}
\address[Hyunjin Ahn]{\newline Department of Mathematical Sciences\newline Seoul National University, Seoul 08826, Republic of Korea}
\email{yagamelaito@snu.ac.kr}

\author[S-Y. Ha]{Seung-Yeal Ha}
\address[S.-Y. Ha]{\newline Department of Mathematical Sciences and Research Institute of Mathematics \newline Seoul National University, Seoul 08826 and \newline
Korea Institute for Advanced Study, Hoegiro 85, Seoul, 02455, Republic of 
Korea} \email{syha@snu.ac.kr}

\author[M. Kang]{Myeongju Kang}
\address[M. Kang]{\newline Department of Mathematical Sciences and Research Institute of Mathematics \newline Seoul National University, Seoul 08826, Republic of Korea and}
\email{bear0117@snu.ac.kr}

\newtheorem{theorem}{Theorem}[section]
\newtheorem{lemma}{Lemma}[section]

\newtheorem{proposition}{Proposition}[section]

\newtheorem{remark}{Remark}[section]

\newtheorem{definition}{Definition}[section]

\newcommand{\bbr}{\mathbb R}

\newcommand{\bbs}{\mathbb S}

\newcommand{\bbt} {\mathbb T}

\newcommand{\bx}{\mbox{\boldmath $x$}}

\newcommand{\bv}{\mbox{\boldmath $v$}}

\newcommand{\cD}{\mathcal D}

\def\charf {\mbox{{\text 1}\kern-.30em {\text l}}}







\setlength{\textwidth}{\paperwidth}
\addtolength{\textwidth}{-2in}
\calclayout

\begin{document}
\tikzstyle{block} = [rectangle, draw, 
    text width=15em, text centered, rounded corners, minimum height=3em]
\tikzstyle{line} = [draw, -latex']

\date{\today}

\subjclass[2010]{34D06, 70F10, 70G60, 92D25}
\keywords{complete phase-locking, complete synchronization, the relativistic Kuramoto model, non-relativistic limit, order parameters, phase-locked state}

\thanks{Acknowledgment: The work of C. Min was supported by the new faculty research fund of Ajou University, the work of S.-Y. Ha is partially supported by National Research Foundation of Korea Grant NRF-2020R1A2C3A01003881), and the work of M. Kang was supported by the National Research Foundation of Korea(NRF) grant funded by the Korea government(MSIP)(2016K2A9A2A13003815)}

\begin{abstract}
We propose a generalized Kuramoto model with relativistic effects and investigate emergent asymptotic behaviors. The proposed generalized Kuramoto model incorporates relativistic Kuramoto(RK) type models which can be derived from the relativistic Cucker-Smale (RCS) on the unit sphere under suitable approximations. We present several sufficient frameworks leading to complete synchronization in terms of initial data and system parameters. For the relativistic Kuramoto model,  we show that it can be reduced to the Kuramoto model in any finite time interval in a non-relativistic limit. We also provide several numerical examples for two approximations of the relativistic Kuramoto model, and compare them with analytical results. 
\end{abstract}

\maketitle

\centerline{\date}



\section{Introduction} \label{sec:1}
\setcounter{equation}{0}
Collective phenomena are ubiquitous in nature and human societies, e.g., aggregation of bacteria \cite{T-B}, flocking of birds \cite{C-Sm,H-J-K,T-T}, synchronization of pacemaker cells and fireflies \cite{B-C-M1,B-C-M,B-D-P,B-B,C-S,Ku2,Pe,P-R}, swarming of fish \cite{D-M1,D-M2,D-M3}, etc. For a brief introduction on the subject, we refer to review papers and books  \cite{A-B,P-R,St,VZ,Wi1}. To begin with,  we consider the nonrelativistic Kuramoto model. Let $\theta_i = \theta_i(t)$ be the phase of the $i$-th Kuramoto oscillator whose dynamics is governed by the system of first-order ordinary differential equations:
\begin{equation} \label{A-1}
{\dot \theta}_i = \nu_i + \frac{\kappa}{N} \sum_{j= 1}^{N} \sin(\theta_j - \theta_i), \quad \forall~i = 1, \cdots, N,
\end{equation}
where $\nu_i$ and $\kappa$ are natural frequency of the $i$-th oscillator and nonnegative coupling strength, respectively. The emergent dynamics of \eqref{A-1} has been extensively studied in literature from various viewpoints, to name a few, complete synchronization \cite{B-D-P,C-H-J-K,C-S,D-X,H-K-R}, critical coupling strength \cite{D-B},  uniform mean field limit \cite{H-K-P-Z,La}, gradient flow formulation \cite{V-W}, thermodynamic Kuramoto model \cite{H-P-R-S}, kinetic Kuramoto model \cite{B-C-M1,B-C-M,C-C-H-K-K} etc.

Next, we present a generalized phase-coupled model generalizing the Kuramoto model \eqref{A-1}. Let $F$ be an odd and continuously differentiable monotone increasing in the interval $(-L, L)$:
\begin{equation} \label{A-2}
 F^{\prime}(\omega) > 0, \quad F(-\omega) = -F(\omega), \quad \forall~\omega \in (-L, L). 
 \end{equation}
Then, the generalized Kuramoto model to be discussed in this paper reads as follows.
\begin{equation} \label{A-3}
F \big( \dot\theta_i \big) = \nu_i +\frac{\kappa}{N} \sum_{j=1}^N \sin(\theta_j -\theta_i), \quad \forall~i=1,\cdots, N.
\end{equation}
Of course, for the well-definedness of $\dot\theta_i$, the R.H.S. of \eqref{A-3} must be in the range of $F$. Note that the choice $F(\omega) = \omega$ with $L = \infty$ satisfies the relations \eqref{A-2} and  system \eqref{A-3} reduces to the Kuramoto model \eqref{A-1}, whereas the choice
\[   \Gamma(\omega):= \frac{1}{\sqrt{1 - \frac{|\omega|^2}{c^2}}}, \quad L = c, \quad F(\omega) = \omega \Gamma(\omega) \bigg(1+\frac{\Gamma(\omega)}{c^2}\bigg) \]
appears in the modeling of \cite{AHKS} which can be derived from the relativistic Cucker-Smale model \cite{H-K-Rug-1} (see Section \ref{sec:3}) and it satisfy the relations \eqref{A-2}. Here $c$ is the speed of light. The non-local modeling for $F({\dot \theta}_i)$ also appears in the fractional Kuramoto model \cite{H-J} which is out of our scope. In this paper, we are interested in the following two questions.\begin{itemize}
\item
(Q1):~Under what conditions on system parameters and initial data, does the generalized Kuramoto model \eqref{A-3} exhibit asymptotic synchronization?
\vspace{0.2cm}
\item
(Q2):~For the relativistic Kuramoto model
\begin{align} \label{A-4}
\begin{aligned}
& \dot\theta_i \Gamma_i \bigg(1+\frac{\Gamma_i}{c^2}\bigg) = \nu_i +\frac{\kappa}{N} \sum_{j=1}^N \sin(\theta_j -\theta_i), \quad \forall ~t > 0, \quad \forall~i=1,\cdots, N,
\end{aligned}
\end{align}
does system \eqref{A-4} converge to the Kuramoto model \eqref{A-1} as $c \to \infty$?
\end{itemize}

The purpose of this paper tries to answer the above two questions. More precisely, our main results can be summarized as follows. First, we provide sufficient frameworks leading to the complete synchronization (see Definition \ref{D2.1}) in relation with (Q1).  For an identical ensemble with the same natural frequency, phase dynamics is governed by the following system:
\begin{equation*} \label{A-4-1}
\begin{cases}
\displaystyle F\big( \dot\theta_i \big) = \nu +\frac{\kappa}{N} \sum_{j=1}^N \sin(\theta_j -\theta_i), \quad \forall~t>0, \\
\displaystyle \theta_i(0) = \theta_i^{in}, \quad \forall~i=1,\cdots, N.
\end{cases}
\end{equation*}
Our first set of main result deals with complete synchronization. Consider a homogeneous ensemble whose dynamics is governed by the following system:
 \begin{equation} \label{A-5}
F \big( \dot\theta_i \big) = \nu +\frac{\kappa}{N} \sum_{j=1}^N \sin(\theta_j -\theta_i), \quad \forall~i=1,\cdots, N.
\end{equation}
If the initial data and coupling strength satisfy
\[ {\mathcal D}(\Theta^{in}) < \pi \quad \mbox{and} \quad \kappa > 0, \]
one has an exponential synchronization: there exists a positive constant $\Lambda = \Lambda(\Theta^{in}, \nu, \kappa, (F^{-1})')$ such that  
\[ {\mathcal D}(\Theta(t)) \leq e^{-\Lambda t} {\mathcal D}(\Theta^{in}), \quad \forall~t > 0. \]
For details, see Theorem \ref{T4.1}. As another setting, we assume that initial data and coupling strength satisfy
\begin{align*}
R(0) = \bigg| \frac{1}{N} \sum_{i=1}^N e^{\mathrm i\theta_i^{in}} \bigg| > 0 \quad \mbox{and} \quad \kappa > 0,
\end{align*}
Then, the phase configuration tends to either completely synchronized state or bi-cluster state:
\begin{align*}
\lim_{t\to\infty} (\theta_i(t) -\theta_j(t)) = 0 \mod ~\pi, \quad \forall~i, j=1,\cdots, N.
\end{align*}
For details, see Theorem \ref{T4.2}. On the other hand, for a heterogeneous ensemble, if initial configuration, natural frequency, and coupling strength satisfy
\begin{align*}
& \kappa > {\mathcal D}(\Omega)>0, \quad{\mathcal  D}(\Theta^{in}) < \pi-\theta_*, \quad \theta_* :=\sin^{-1} \bigg( \frac{{\mathcal D}(\nu)}{\kappa} \bigg) \in\bigg( 0, \frac{\pi}{2} \bigg).
\end{align*}
Then, asymptotic complete synchronization emerges (see Theorem \ref{T4.3}):
\begin{align*}
\lim_{t\to\infty} \big| \dot\theta_i(t) -\dot\theta_j(t) \big| = 0.
\end{align*}
Second, our last result is concerned with the non-relativistic limit of \eqref{A-4}. More precisely, let $\Theta^c = (\theta_1^c, \cdots, \theta_N^c)$ and $\Theta^{\infty} = (\theta_1^{\infty}, \cdots, \theta_N^{\infty})$ be solutions to \eqref{A-4} and \eqref{A-1} with the same initial data, respectively. We set 
\[ \| \Theta^c - \Theta^{\infty} \|_1 := \sum_{i=1}^{N} |\theta^c_i - \theta^{\infty}_i|. \]
Then, one can derive a non-relativistic limit (see Theorem \ref{T5.1}):
\[ \lim_{c\to\infty}\sup_{0\leq t\leq T} \|\Theta^c(t) - \Theta^{\infty}(t) \|_1 = 0. \]	

\vspace{.2cm}

The rest of paper is organized as follows. In Section \ref{sec:2}, we briefly review the (non-relativistic) Kuramoto model and its emergent dynamics. In Section \ref{sec:3}, we introduce the relativistic Kuramoto model, basic structural properties and a gradient flow formulation and related convergence results.  In Section \ref{sec:4}, we study emergent properties of the relativistic Kuramoto model for homogenous and heterogeneous ensembles. In Section \ref{sec:5}, we study a non-relativistic limit from the relativistic model to the Kuramoto model, as the speed of light tends to infinity, and we  present several numerical simulations for the non-relativistic and relativistic Kuramoto models and compare them with analytical results. Finally, Section \ref{sec:6} is devoted to a brief summary of our main results and some remaining issues to be explored in a future work.  \newline

\noindent {\bf Gallery of notation}: Throughout the paper, we will use the following simplified notation:
\begin{align*}
& {\mathcal N} := \{1, \cdots, N \}, \quad \Theta := (\theta_1, \cdots, \theta_N), \quad \dot{\Theta} := ({\dot \theta}_1, \cdots, {\dot \theta}_N), \quad 
\Omega := (\nu_1, \cdots, \nu_N).
\end{align*}
We set phase and natural frequency diameters as 
\[ {\mathcal D}(\Theta) := \max_{1\leq i, j\leq N} |\theta_i -\theta_j|, \quad {\mathcal D}(\Omega) := \max_{1\leq i, j \leq N} |\nu_i-\nu_j|. \]

\section{Preliminaries} \label{sec:2}
\setcounter{equation}{0}
In this section, we first recall the nonrelativistic Kuramoto model and review the state-of-the-art on the emergent dynamics, and then we introduce the relativistic Kuramoto model and study its basic properties. 

\subsection{The Kuramoto model} \label{sec:2.1}
Let $z_i =  e^{{\mathrm i} \theta_i} \in \bbs^1$ be the position of the $i$-th rotor, and let $\theta_i$ and ${\dot \theta}_i$ denote the phase and frequency of the $i$-th oscillator, respectively. Then, the phase dynamics is governed by the following Cauchy problem: 
\begin{equation} \label{Ku}
\begin{cases}
\displaystyle \dot{\theta}_{i} = \nu_i +
\frac{\kappa}{N}\sum_{j=1}^{N}\sin(\theta_{j} - \theta_{i}), \quad \forall~t > 0, \\
\theta_i(0) = \theta_{i}^{in},  \quad \quad \forall~i =1,\cdots, N.
\end{cases} 
\end{equation}
In the sequel, we introduce a minimum material to be used crucially. First, we recall several concepts in relation with collective dynamics. 
\begin{definition}  \label{D2.1}
\emph{\cite{H-K-R}}  Let $\Theta = (\theta_1, \cdots, \theta_N)$ be a Kuramoto phase vector whose dynamics is governed by \eqref{Ku}.
\begin{enumerate}
\item
$\Theta = \Theta(t)$ is a phase-locked state of \eqref{Ku}, if all relative phase differences are constant:
\[  \theta_i(t) - \theta_j(t) = \theta_i^{in} - \theta_j^{in},  ~~\forall~t \geq 0, ~~\forall~i, j=1,\cdots, N. \] 
\item
$\Theta = \Theta(t)$ exhibits (asymptotic) complete phase-locking, if the relative phase differences converge as $t \to \infty$:
\[ \exists \lim_{t \to \infty} (\theta_i(t) - \theta_j(t)), \quad \forall~i, j=1,\cdots, N. \]
\item
$\Theta = \Theta(t)$ exhibits complete synchronization, if the relative frequency differences converge to zero as $t \to \infty$:
\[  \lim_{t \to \infty} |{\dot \theta}_i(t) - {\dot \theta}_j(t)| = 0, \quad \forall~i, j=1,\cdots, N. \]
\end{enumerate}
\end{definition}
In what follows, we briefly review an order parameter and a gradient flow formulation of $\eqref{Ku}_1$. 

\subsubsection{Order parameter}  \label{sec:2.1.1}
Let $\Theta  = \Theta(t)$ be an $N$-phase vector whose time evolution is governed by \eqref{Ku}. Then, we define real order parameters $R(\Theta)$ and $\phi(\Theta)$ by the following relation:
\begin{equation*} \label{Order}
R(\Theta) e^{\mathrm{i} \phi(\Theta)} :=\frac{1}{N}\sum_{j=1}^{N}e^{\mathrm{i}\theta_j}.
\end{equation*}
This implicit relation yields
\begin{align}
\begin{aligned} \label{R-order}
R^2 &=  R e^{\mathrm{i} \phi(\Theta)}   \overline{R e^{\mathrm{i} \phi(\Theta)} } = \frac{1}{N^2} \sum_{j, k=1}^{N}e^{\mathrm{i} (\theta_j - \theta_k)}  \\
&=  \frac{1}{N^2}  \Big( \sum_{j, k=1}^{N} \cos(\theta_j  - \theta_k) + {\mathrm i}  \sum_{j, k=1}^{N} \sin(\theta_j  - \theta_k) \Big) =  \frac{1}{N^2}  \sum_{j, k=1}^{N} \cos(\theta_j  - \theta_k).
\end{aligned}
\end{align}
Note that the amplitude order parameter $R(\Theta) \in [0, 1]$ is well-defined for all $t\ge 0$, and it is invariant under uniform rotation. It measures overall ``phase coherence" of the ensemble $\Theta$. For example, $R(\Theta)=1$ corresponds to the state in which all phases are the same, i.e., complete phase synchronization:
\[ R(\Theta) = 1 \iff \Theta = (\alpha, \cdots, \alpha) \mod 2\pi, \quad \mbox{for some } ~\alpha \in \bbr, \]
whereas $R(\Theta)  = 0$ corresponds to an incoherent state in which oscillators behave independently. On the other hand, $\phi(\Theta)$ is well defined modulo $2\pi$ if $R(\Theta)>0$, but it is meaningless when $R(\Theta)=0$. If we suppose $R(\Theta(t))>0$ for all $t$ in some time interval $\mathcal{I}$, then it is possible to choose a branch of $\phi(\Theta(t))$ smoothly on $\mathcal{I}$. As long as there is no confusion, we sometimes suppress $\Theta$-dependence on $R$ and $\phi$:
\[ R(t) := R(\Theta(t)), \quad \phi(t) := \phi(\Theta(t)), \quad \forall~t \in \mathcal{I}. \]

\subsubsection{A gradient flow formulation} \label{sec:2.1.2}
Next, we present another alternative formulation of the Kuramoto model as a gradient flow \cite{V-W} with the analytical potential $V$ on $\bbr^N$:
\begin{align} \label{Ku-grad}
\begin{aligned}
&  {\dot \Theta} = -\nabla_{\Theta} V(\Theta), \\
&  V[\Theta] :=-\sum_{k=1}^N\nu_k \theta_k+ \frac{\kappa}{2N}\sum_{k,l=1}^N \big(1-\cos(\theta_k-\theta_l)\big).
\end{aligned}
\end{align}
Note that the double sum in $\eqref{Ku-grad}_2$ can be simplified using \eqref{R-order}:
\begin{equation} \label{p-o}
 \frac{\kappa}{2N}\sum_{k,l=1}^N \big(1-\cos(\theta_k-\theta_l)\big) =  \frac{\kappa N}{2} (1 -R^2).
 \end{equation}
 Then, it follows from \eqref{Ku-grad} and \eqref{p-o} that 
\[ V[\Theta]  = - \Omega \cdot \Theta  + \frac{\kappa N}{2} (1 -R^2), \]
where $\cdot$ is the usual inner product in $\bbr^N$. The Kuramoto model \eqref{Ku-grad}  has the following property regarding asymptotic dynamics.
\begin{proposition}\label{P2.1}
\emph{\cite{H-K-R}}
Let $\Theta = \Theta(t)$ be a uniformly bounded global solution to \eqref{Ku-grad} in $\bbr^N$:
\begin{equation*}\label{bdd}
\sum_{i=1}^{N} \nu_i = 0 \quad \mbox{and} \quad \sup_{0 \leq t < \infty} \|\Theta(t)\|_{\infty} <\infty.
\end{equation*}
Then, the phase vector $\Theta(t)$ and the frequency vector
 $\dot \Theta(t)$ converge to a phase locked state and the zero vector,
 respectively, as $t\to \infty$, i.e.,  there exists a phase locked state
 $\Theta^{\infty}$ such that
\[ \lim_{t \to \infty} \|\Theta(t) - \Theta^{\infty}\|_{\infty}  = 0 \quad \mbox{and} \quad \lim_{t \to \infty} \|{\dot \Theta}(t)\|_{\infty} = 0. \]
\end{proposition}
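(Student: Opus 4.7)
My plan is to exploit the real-analyticity of $V$ on $\bbr^N$ (it is a finite sum of linear and cosine terms) together with the exact gradient-flow structure $\dot\Theta = -\nabla V(\Theta)$, and invoke the classical Lojasiewicz gradient-inequality method for analytic gradient systems.

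First I would derive the energy identity
\[
\frac{d}{dt} V(\Theta(t)) \;=\; \nabla V(\Theta(t)) \cdot \dot\Theta(t) \;=\; -\|\dot\Theta(t)\|_2^2 \;\leq\; 0,
\]
so $t \mapsto V(\Theta(t))$ is non-increasing and, by uniform boundedness of $\Theta$ together with continuity of $V$, converges to some limit $V_\infty$. Integrating yields $\int_0^\infty \|\dot\Theta(t)\|_2^2\, dt < \infty$. The hypothesis $\sum_i \nu_i = 0$ and the skew-symmetry $\sin(\theta_j-\theta_i)=-\sin(\theta_i-\theta_j)$ force $\sum_i \dot\theta_i \equiv 0$, so the barycentric phase is conserved and the uniformly bounded orbit stays in a compact set. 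Consequently the $\omega$-limit set $\omega(\Theta)$ is non-empty, compact and connected, and by the standard LaSalle argument for gradient systems consists entirely of critical points of $V$, all sharing the common value $V_\infty$. Since $\dot\Theta = -\nabla V(\Theta)$, such critical points are exactly the phase-locked equilibria in the sense of Definition \ref{D2.1}.

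The principal obstacle is to upgrade ``the orbit accumulates on the critical set'' to ``the orbit converges to a single equilibrium''. I would handle this via the Lojasiewicz gradient inequality, which applies because $V$ is real-analytic: at each critical point $\Theta^*$ there exist constants $c > 0$, $\theta \in (0,\tfrac{1}{2}]$, and an open neighborhood $U_{\Theta^*}$ on which
\[
\|\nabla V(\Theta)\|_2 \;\geq\; c\,|V(\Theta) - V_\infty|^{1-\theta}.
\]
Since $\omega(\Theta)$ is compact, a finite subcover produces a tubular neighborhood $U \supset \omega(\Theta)$ on which a uniform inequality of the same form holds (with common $c,\theta$), and because $\mathrm{dist}(\Theta(t),\omega(\Theta)) \to 0$ the trajectory satisfies $\Theta(t) \in U$ for all $t \geq T_0$. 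Then the standard computation
\[
-\frac{d}{dt}\bigl(V(\Theta(t)) - V_\infty\bigr)^{\theta} \;=\; \theta\,\bigl(V-V_\infty\bigr)^{\theta-1}\,\|\dot\Theta\|_2^2 \;\geq\; \theta c\,\|\dot\Theta(t)\|_2
\]
integrates to $\int_{T_0}^{\infty} \|\dot\Theta(t)\|_2\, dt < \infty$, so the orbit has finite arclength and must converge to a single phase-locked limit $\Theta^\infty \in \omega(\Theta)$. By continuity of $\nabla V$, $\dot\Theta(t) = -\nabla V(\Theta(t)) \to -\nabla V(\Theta^\infty) = 0$, which finishes the argument. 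The technically delicate step is obtaining the uniform Lojasiewicz bound on a neighborhood of $\omega(\Theta)$; this is exactly what real-analyticity of $V$ buys through the classical theorem of Lojasiewicz, so there is no further work to do beyond the covering argument.
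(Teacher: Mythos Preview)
Your proposal is correct and follows the classical Lojasiewicz route for analytic gradient flows. Note, however, that the paper does not supply its own proof of Proposition~\ref{P2.1}: it is quoted directly from \cite{H-K-R} as a known result and no argument is given in the text. So there is no ``paper's proof'' to compare against beyond the citation.

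That said, your approach is exactly the standard one underlying results of this type, and is almost certainly what \cite{H-K-R} does (the analyticity of $V$ is precisely the structural feature singled out in the discussion around \eqref{Ku-grad}). A couple of minor remarks on your write-up: first, the conservation of the barycenter $\sum_i \dot\theta_i \equiv 0$ is not actually needed for compactness here, since uniform boundedness of $\Theta$ is already assumed as a hypothesis; it is relevant only if one wants to \emph{derive} boundedness rather than assume it. Second, in the covering step one must take the minimum of the finitely many Lojasiewicz exponents $\theta_k$ and observe that near $\omega(\Theta)$ one has $|V-V_\infty|<1$, so that $|V-V_\infty|^{1-\theta}$ is monotone in $\theta$ and the uniform inequality with the smallest exponent follows; you allude to this but do not spell it out. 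Neither point is a gap---the argument is sound as written.
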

\subsection{Previous results} \label{sec:2.2} In this subsection, we briefly review previous results on the emergence of asymptotic phase-locking which is closely related with main results in this paper. 
\begin{theorem} \label{T2.1}
\emph{\cite{C-H-J-K,D-B1}} 
Suppose that natural frequencies, the coupling strength and initial data $\Theta^{in}$ satisfy
\begin{equation*} \label{NE-1}
\sum_{i=1}^{N} \nu_i = 0, \quad  \kappa > \cD(\Omega), \quad   \cD(\Theta^{in}) <  \pi - {{ \arcsin \left (\frac{\cD(\Omega)}{\kappa} \right)}}, 
\end{equation*}
and let $\Theta = \Theta(t)$ be a solution to \eqref{Ku}. Then, the following assertions hold:

\vspace{.1cm}

\begin{enumerate}
\item
The phase diameter is bounded: there exists a finite time $T > 0$ such that
\[
D(\Theta(t))\le \arcsin \left (\frac{\cD(\Omega)}{\kappa} \right),\quad \forall~t\ge T.
\]
\item
The phase vector $\Theta(t)$ approaches a phase-locked state $\Theta^\infty$ with a exponential rate: there exist positive
constants $C_0(T)$ and $\Lambda = {\mathcal O}(\kappa)$ such that
\[ \cD({\dot \Theta}(t)) \leq C_0 \exp(-\Lambda t), \quad \forall~t\ge 0.\]
\item
The emergent phase-locked state $\Theta^\infty$ is unique up to $U(1)$-symmetry, and is ordered according to the ordering of their natural frequencies: there are constants $U$ and $L$ such that for any indices $i,j$ with $\nu_i\ge \nu_j$,
\[
\sin^{-1}\left(\frac{\nu_i-\nu_j}{\kappa U}\right)\le \theta_i^\infty-\theta_j^\infty \le \sin^{-1}\left(\frac{\nu_i-\nu_j}{\kappa L}\right).
\]
\end{enumerate}
\end{theorem}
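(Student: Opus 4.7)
The plan is to establish the three conclusions in sequence, each bootstrapping on the previous.

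\emph{Step 1 (contraction of the phase diameter).} First I would derive a scalar differential inequality for $\cD(\Theta(t))$. Let $\theta_M(t)$ and $\theta_m(t)$ denote the extremal phases, so $\cD(\Theta(t))=\theta_M-\theta_m$. Since the argmax and argmin can switch, $\cD(\Theta(t))$ is only Lipschitz and a Dini/Danskin argument is needed to justify differentiation. Along \eqref{Ku}, the sum-to-product identity
\[
\sin(\theta_j-\theta_M)-\sin(\theta_j-\theta_m)=-2\sin\tfrac{\cD(\Theta)}{2}\cos\!\left(\theta_j-\tfrac{\theta_M+\theta_m}{2}\right),
\]
combined with $\cos\bigl(\theta_j-\tfrac{\theta_M+\theta_m}{2}\bigr)\ge\cos\tfrac{\cD(\Theta)}{2}$ (valid because every $\theta_j\in[\theta_m,\theta_M]$), should yield
\[
\frac{d}{dt}\cD(\Theta(t))\le \cD(\Omega)-\kappa\sin\cD(\Theta(t)).
\]
The hypothesis $\cD(\Theta^{in})<\pi-\arcsin(\cD(\Omega)/\kappa)$ places the initial diameter inside the arc where $\kappa\sin\cD(\Theta)>\cD(\Omega)$, so a standard comparison-ODE argument forces $\cD(\Theta(t))$ into $[0,\arcsin(\cD(\Omega)/\kappa)]$ within a finite time $T$.

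\emph{Step 2 (exponential decay of frequency differences).} Next I would differentiate \eqref{Ku} in time to obtain the linear consensus-type ODE
\[
\dot\omega_i=\frac{\kappa}{N}\sum_{j=1}^{N}\cos(\theta_j-\theta_i)(\omega_j-\omega_i),\qquad \omega_i:=\dot\theta_i.
\]
For $t\ge T$ every weight $\cos(\theta_j-\theta_i)$ is bounded below by $\sqrt{1-(\cD(\Omega)/\kappa)^2}>0$, so extracting the $j=m$ and $j=M$ contributions and applying the same Dini-type argument to $\cD(\dot\Theta(t))$ would give
\[
\frac{d}{dt}\cD(\dot\Theta(t))\le -\Lambda\,\cD(\dot\Theta(t)),\qquad \Lambda=\mathcal{O}\!\left(\kappa\sqrt{1-(\cD(\Omega)/\kappa)^2}\right),
\]
which delivers the exponential decay of conclusion (2). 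Integrating each $\omega_i-\omega_j$ over $[t,\infty)$ then produces a Cauchy sequence and hence the phase-locked limit $\Theta^\infty$.

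\emph{Step 3 (uniqueness and ordering).} At the limit, $\omega_i^\infty$ is independent of $i$, so inserting this into the stationary form of \eqref{Ku} and applying $\sin A-\sin B=2\cos\tfrac{A+B}{2}\sin\tfrac{A-B}{2}$ would yield
\[
\nu_i-\nu_j=\frac{2\kappa}{N}\sin\tfrac{\theta_i^\infty-\theta_j^\infty}{2}\sum_{k=1}^{N}\cos\!\left(\tfrac{\theta_i^\infty+\theta_j^\infty}{2}-\theta_k^\infty\right).
\]
All cosines are strictly positive by the small-diameter bound from Step 1, so choosing $U$ and $L$ as the pointwise maximum and minimum (over $i,j$) of the normalized inner sums produces the stated two-sided $\arcsin$ bracket, and monotonicity of $\sin^{-1}$ forces $\theta_i^\infty\ge\theta_j^\infty$ whenever $\nu_i\ge\nu_j$. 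Uniqueness modulo the rotation $\theta_i\mapsto\theta_i+\alpha$ would then follow from strict convexity of the Kuramoto potential in \eqref{Ku-grad} restricted to the contracted arc obtained in Step 1. The hardest part will be the non-smoothness of the extremum-based functionals $\cD(\Theta)$ and $\cD(\dot\Theta)$ in Steps 1--2, requiring careful use of Dini derivatives through index switches and a verification that the trajectory cannot escape through the upper endpoint $\pi-\arcsin(\cD(\Omega)/\kappa)$ of the invariant arc.
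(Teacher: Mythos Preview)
The paper does not supply its own proof of Theorem~\ref{T2.1}: the result is quoted in Section~\ref{sec:2.2} as background from \cite{C-H-J-K,D-B1}, with no accompanying argument. So there is nothing in the paper to compare your sketch against directly. That said, your three-step outline is precisely the standard route taken in those references, and it is also the template the present paper follows in Section~\ref{sec:4} when it proves the analogous statements (Lemma~\ref{L4.1}, Lemma~\ref{L4.3}, Theorem~\ref{T4.1}, Theorem~\ref{T4.3}) for the generalized model \eqref{A-3}. In particular, the paper's Lemma~\ref{L4.3} carries out exactly your Step~1 for the $G$-deformed system, using the elementary inequality $\sin x+\sin y>\sin(x+y)$ on $(0,\pi)$ in place of your sum-to-product manipulation; the two are equivalent. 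For Step~2 the paper takes a slightly different path in Theorem~\ref{T4.3}: rather than deriving a Gr\"onwall inequality for $\cD(\dot\Theta)$, it integrates the dissipation identity of Proposition~\ref{P3.1} and invokes Barbalat's lemma, which gives convergence but not an explicit exponential rate. Your frequency-diameter argument is closer to what \cite{C-H-J-K} actually does and is what is needed to obtain the quantitative decay asserted in item~(2). Your Step~3 is correct in spirit; just note that the factor $2\sin\tfrac{\theta_i^\infty-\theta_j^\infty}{2}$ must be converted to $\sin(\theta_i^\infty-\theta_j^\infty)$ via the missing $\cos\tfrac{\theta_i^\infty-\theta_j^\infty}{2}$, which gets absorbed into the constants $U$ and $L$.
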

\begin{remark}
Asymptotic phase-locking for generic initial data was first obtained by Ha et al. \cite{H-K-R} in a sufficiently large coupling regime. An important piece of the argument of \cite{H-K-R} is that some statements of Theorem \ref{T2.1} can be extended to the case where we have only a majority, not the totality, of the population lying on a small arc.
\end{remark}
\begin{theorem}\label{T2.2}
\emph{\cite{H-K-R}}
Suppose the initial configuration $\Theta^{in}$ and coupling strength satisfy
\begin{equation*}\label{generic}
R^{in} >0 ~\mbox{ and }~ \theta_i^{in} \not\equiv \theta_j^{in} \mod 2\pi, \quad \forall~i \neq j, \quad \kappa \gg {\mathcal D}(\Omega),
\end{equation*}
and let $\Theta(t)$ be a solution to \eqref{Ku} with the initial data $\Theta^{in}$. Then, there exists a phase-locked state $\Theta^{\infty}$ such that 
\[ \lim_{t \to \infty} \|\Theta(t) - \Theta^{\infty}\|_{\infty} = 0. \]
\end{theorem}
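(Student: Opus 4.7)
The plan is to combine the order-parameter formulation with the gradient flow structure \eqref{Ku-grad} and reduce matters to the regime where Theorem \ref{T2.1} becomes applicable. First I would pass to a comoving frame $\tilde{\theta}_i := \theta_i - \bar\nu t$ with $\bar\nu := \frac{1}{N}\sum_i \nu_i$, so that without loss of generality $\sum_i \nu_i = 0$ and the system fits the setting of Proposition \ref{P2.1}. Rewriting the Kuramoto equation in mean-field form as $\dot{\theta}_i = \nu_i + \kappa R\sin(\phi - \theta_i)$, the coupling is ``all-to-mean'' modulated by the amplitude $R(t)$, so the strategy is to control $R$ from below and let the restoring force do the work.

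Step 1 (persistence of $R$): Starting from the identity \eqref{R-order}, I would differentiate $R^{2}$ along the flow to extract a differential inequality in which the coupling contributes a positive term of order $\kappa R^{2}(1-R^{2})$ while the heterogeneity of the $\nu_i$ contributes a term controlled by $\mathcal{D}(\Omega)R$. A continuity argument starting from the hypothesis $R^{in}>0$ then yields a uniform lower bound $R(t)\ge R_{\ast}>0$ for some $R_{\ast}$ depending on $R^{in}$, $\kappa$, and $\mathcal{D}(\Omega)$, provided $\kappa/\mathcal{D}(\Omega)$ is taken sufficiently large.

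Step 2 (confinement to a short arc): With $R(t)\ge R_{\ast}$, the drift acts as a restoring force $-\kappa R_{\ast}\sin(\theta_i-\phi)$ on each oscillator. For any $\theta_i$ lying in the far arc $|\theta_i-\phi|>\tfrac{\pi}{2}+\delta$, the inward push is at least $\kappa R_{\ast}\cos\delta - \mathcal{D}(\Omega)$, which is strictly positive for the prescribed large coupling. A finite-time argument then shows that at some time $t^{\ast}$ every oscillator lies within the near arc of $\phi$, and I would further sharpen this to $\mathcal{D}(\Theta(t^{\ast}))<\pi-\arcsin(\mathcal{D}(\Omega)/\kappa)$. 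From $t^{\ast}$ onwards the hypotheses of Theorem \ref{T2.1} are met, which yields asymptotic phase-locking $\Theta(t)\to\Theta^{\infty}$.

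The main obstacle will be Step 2: controlling the stragglers. A priori, a minority of oscillators with $|\theta_i^{in}-\phi^{in}|$ close to $\pi$ could conceivably lap the bulk under a weak coupling, and one cannot rule this out from $R^{in}>0$ alone. The genericity hypothesis $\theta_i^{in}\not\equiv\theta_j^{in}\mod 2\pi$ together with $R^{in}>0$ forbids a perfectly symmetric antipodal configuration, and the sufficiently large coupling $\kappa\gg\mathcal{D}(\Omega)$ (with the implicit constant depending on $R^{in}$) is what guarantees the bulk can entrain every straggler in finite time. Making this entrainment threshold quantitative, and excluding the degenerate possibility that a minority oscillator indefinitely straddles the antipode of $\phi(t)$ without ever losing ground, is the heart of the argument and is where the fine bookkeeping of \cite{H-K-R} is needed.
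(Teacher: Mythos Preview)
The paper does not prove Theorem \ref{T2.2}; it is stated without proof in Section \ref{sec:2.2} as a background result quoted from \cite{H-K-R}. The only information the paper offers about the original argument is the Remark following Theorem \ref{T2.1}: the crucial ingredient in \cite{H-K-R} is an extension of Theorem \ref{T2.1} to the situation where only a \emph{majority}, rather than the totality, of the oscillators initially lie on a short arc.

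Your sketch is compatible with that hint and is in the spirit of \cite{H-K-R}. You correctly identify the hard step---entraining the stragglers near the antipode of $\phi$---and you openly defer to \cite{H-K-R} for the quantitative bookkeeping there. One refinement worth noting: rather than deriving a global lower bound on $R(t)$ directly via a differential inequality for $R^2$ (your Step 1), the approach suggested by the Remark proceeds by first using $R^{in}>0$ together with a pigeonhole argument to guarantee that a majority subcluster is already confined to an arc of length less than $\pi$, then controlling that subcluster with a variant of Theorem \ref{T2.1}, and finally showing the minority is captured by the majority in finite time. The lower bound on $R$ is then a consequence rather than the primary driver. Since the present paper contains no proof of this statement, there is nothing further to compare your proposal against.
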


\section{A gradient flow formulation} \label{sec:3}
\setcounter{equation}{0}
In this section, we first show that the relativistic Cucker-Smale(RCS) model presented in \cite{AHKS} falls down to a relativistic Kuramoto model \eqref{A-4}, and then we introduce a gradient-like flow framework for later usage.
 
\subsection{From  the RCS model to the RK model} \label{sec:3.1}
In this subsection, we briefly review the derivation of the RK model from the RCS model on the unit circle $\bbs^1$. First, we consider the RCS model on $\bbs^d$:
\begin{equation}\label{C-1}
\begin{cases}
\displaystyle \dot{\bx}_i = \bv_i, \quad \forall~t>0, \quad \forall~i=1,\cdots, N, \vspace{.2cm} \\
\displaystyle (\dot\bv_i+\|\bv_i\|^2\bx_i)\bigg(\Gamma_i\bigg(1+\frac{\Gamma_i}{c^2}\bigg)\bigg)+\bv_i\frac{d}{dt}\bigg(\Gamma_i\bigg(1+\frac{\Gamma_i}{c^2}\bigg)\bigg) \vspace{.2cm} \\
\hspace{1cm} = \displaystyle\frac{\kappa}{N} \sum_{j=1}^N \psi(\bx_i, \bx_j) \left({\bv_j}-\bv_i-\frac{\langle\bx_i,\bv_j\rangle}{(1+\langle\bx_i,\bx_j\rangle)}(\bx_i+\bx_j)\right), \vspace{.2cm} \\
\displaystyle (\bx_i (0), \bv_i(0))=(\bx_i^{in},\bv_i^{in})\in \bbt\mathbb{S}^d\subset \mathbb{R}^{d+1}\times \mathbb{R}^{d+1},
\end{cases}
\end{equation}
where $\psi$ is communicate weight function, and $\Gamma_i$ is the Lorentz factor defined by
\begin{equation*} \label{C-1-1}
\Gamma_i := \frac{1}{\sqrt{1- \frac{\|\bv_i\|^2}{c^2}}}, \quad \forall~i = 1, \cdots, N.
\end{equation*}
Since system \eqref{C-1} is reduced from the Riemannian Cucker-Smale model, one has 
\begin{align*}
(\bx_i, \bv_i) \in \bbt\bbs^d, \quad \forall~t\geq0, \quad \forall~i=1,\cdots, N.
\end{align*}
For the derivation of the relativistic Kuramoto model, we choose angle as communication weight function:
\begin{align} \label{C-2}
\psi(\bx_i, \bx_j) = \langle \bx_i, \bx_j \rangle, \quad \forall ~i, j=1, \cdots, N.
\end{align}
Consider $\bbs^1$ embedded in $\bbr^2$, and we set 
\begin{align} \label{C-3}
d = 1, \quad \bx_i := (\cos\theta_{i}, \sin\theta_{i} ), \quad \forall~t\geq0, \quad \forall~i=1,\cdots, N.
\end{align}
This implies
\begin{align} \label{C-4}
\begin{aligned}
& {\bv}_{i} = (-\sin\theta_{i}, \cos\theta_{i}) \dot{\theta}_i, \quad \Gamma_i = \frac{1}{\sqrt{1 - \frac{|{\dot \theta}_i|^2}{c^2}}}, \\
& \dot{\bv}_{i} = -(\cos\theta_{i}, \sin\theta_{i})\dot{\theta}_i^2+(-\sin\theta_{i}, \cos\theta_{i})\ddot{\theta}_i, \quad \forall~t>0, \quad \forall~i=1,\cdots, N.
\end{aligned}
\end{align}
We substitute \eqref{C-2}, \eqref{C-3}, and \eqref{C-4} into \eqref{C-1}$_2$ to obtain
\begin{align} \label{C-5}
\begin{aligned}
\mbox{(LHS)} &= (-\sin\theta_i, \cos\theta_i) \bigg[ \ddot\theta_i \Gamma_i \bigg(1+\frac{\Gamma_i}{c^2}\bigg) +\dot{\theta}_i\frac{d}{dt}\bigg(\Gamma_i\bigg(1+\frac{\Gamma_i}{c^2}\bigg)\bigg) \bigg], \\
\mbox{(RHS)} &= (-\sin\theta_i, \cos\theta_i) \frac{\kappa}{N} \sum_{j=1}^N \cos(\theta_j-\theta_i) \left( \dot\theta_j -\dot\theta_i \right),
\end{aligned}
\end{align}
where we used relation:
\begin{align*}
& \bv_j -\frac{\langle\bx_i,\bv_j\rangle}{(1+\langle\bx_i,\bx_j\rangle)}(\bx_i+\bx_j) \\
& \hspace{.5cm} = (-\sin\theta_{j}, \cos\theta_{j}) \dot{\theta}_{j} -\frac{\dot\theta_j \sin(\theta_i-\theta_j)}{1+\cos(\theta_j-\theta_i)} (\cos\theta_i+\cos\theta_j, \sin\theta_i+\sin\theta_j) \\
& \hspace{.5cm} = (-\sin\theta_i, \cos\theta_i) \dot\theta_j.
\end{align*}
It follows from \eqref{C-5} that
\begin{align*}
& \frac{\kappa}{N} \sum_{j=1}^{N} \cos(\theta_j-\theta_i) \left( \dot\theta_j -\dot\theta_i \right) \\
& \hspace{.5cm} = \ddot\theta_i \Gamma_i \bigg(1+\frac{\Gamma_i}{c^2}\bigg) +\dot{\theta}_i\frac{d}{dt}\bigg(\Gamma_i\bigg(1+\frac{\Gamma_i}{c^2}\bigg)\bigg) = \frac{d}{dt} \bigg( \dot\theta_i \Gamma_i \bigg(1+\frac{\Gamma_i}{c^2}\bigg) \bigg).
\end{align*}
This yields
\begin{align*}
& \dot\theta_i(t) \Gamma_i(t) \bigg(1+\frac{\Gamma_i(t)}{c^2}\bigg) \\
&\hspace{.2cm} = \dot\theta_i(0) \Gamma_i(0) \bigg(1+\frac{\Gamma_i(0)}{c^2}\bigg) +\frac{\kappa}{N} \sum_{j=1}^N \int_0^t \cos(\theta_j(s) -\theta_i(s)) \left( \dot\theta_j(s) -\dot\theta_i(s) \right) ds \\
& \hspace{0.2cm} = \dot\theta_i(0) \Gamma_i(0) \bigg(1+\frac{\Gamma_i(0)}{c^2}\bigg) -\frac{\kappa}{N} \sum_{j=1}^N \sin(\theta_j(0) -\theta_i(0)) +\frac{\kappa}{N} \sum_{j=1}^N \sin(\theta_j(t) -\theta_i(t)).
\end{align*}
Then, we can introduce natural frequencies depending on initial data:
\begin{align*}
& \nu_i := \dot\theta_i(0) \Gamma_i(0) \bigg(1+\frac{\Gamma_i(0)}{c^2}\bigg) -\frac{\kappa}{N} \sum_{j=1}^N \sin(\theta_j(0) -\theta_i(0)), \quad \forall ~i=1,\cdots,N
\end{align*}
and we obtain the Kuramoto type model:
\begin{align} \label{C-6}
\begin{aligned}
& \dot\theta_i \Gamma_i \bigg(1+\frac{\Gamma_i}{c^2}\bigg) = \nu_i +\frac{\kappa}{N} \sum_{j=1}^N \sin(\theta_j -\theta_i), \quad \forall~t\geq0, \quad \forall~i=1,\cdots, N.
\end{aligned}
\end{align}
Note that the well-definedness of \eqref{C-6} can be followed from the fact that 
\begin{align} \label{C-7}
x ~\mapsto~ \frac{cx}{\sqrt{c^2-x^2}}\bigg( 1+\frac{1}{c\sqrt{c^2-x^2}} \bigg)
\end{align}
is monotone increasing odd function on $(-c, c)$ whose image is $\bbr$. From now on, we call this system \eqref{C-6} as the relativistic Kuramoto model. In what follows, we consider two approximations of the L.H.S. of \eqref{C-7} in a low velocity regime as follows.

\vspace{.2cm}

\noindent  $\bullet$~(The first approximation):~Suppose that $\big|\dot\theta_i\big|$'s are sufficiently small compared to the speed of light $c$:
\[  |{\dot \theta}_i| \ll c, \quad \forall~i = 1, \cdots, N \]
so that L.H.S. of \eqref{C-6} can be approximated as 
\begin{align} \label{C-8}
\dot\theta_i\Gamma_i\bigg(1+\frac{\Gamma_i}{c^2}\bigg) \approx \dot\theta_i\Gamma_i = \frac{ \dot\theta_i }{\sqrt{1 - \frac{|{\dot \theta}_i|^2}{c^2}}} =: F_\ell({\dot \theta}_i),
\end{align}
which is the {\it proper velocity} of the $i$-th oscillator. Note that $F_\ell(x) = \frac{x}{\sqrt{1-\frac{x^2}{c^2}}}$ is also odd and monotonically increasing on $(-c, c)$ just like \eqref{C-7}. Hence, our first approximated system for \eqref{C-6} becomes 
\begin{equation} \label{C-9}
F_\ell({\dot \theta}_i) =  \nu_i +\frac{\kappa}{N} \sum_{j=1}^N \sin(\theta_j -\theta_i),
\end{equation}
where $F_\ell \in {\mathcal C}^1(-c, c ~;\bbr)$ is monotone increasing odd function whose image is $\bbr$. 

\vspace{.2cm}

\noindent  $\bullet$~(The second approximation):~Next, we consider the second approximation of \eqref{C-6}. Let $\phi_i$ be the {\it rapidity} of the $i$-th oscillator:
\begin{align*}
\phi_i := \tanh^{-1} \bigg( \frac{\dot\theta_i}{c} \bigg), \quad \forall~t\geq0.
\end{align*}
Then, the L.H.S. of \eqref{C-6} can be further approximated as follows:
\begin{align*}
\dot\theta_i \Gamma_i \bigg(1+\frac{\Gamma_i}{c^2}\bigg) &\approx \dot\theta_i\Gamma_i = \frac{\dot\theta_i}{\sqrt{1-\Big(\frac{\dot\theta_i^2}{c}\Big)^2}} = \frac{c\tanh\phi_i}{\sqrt{1-\tanh^2\phi_i}} = c\sinh\phi_i \\
& \approx c\phi_i = c\tanh^{-1} \bigg( \frac{\dot\theta_i}{c} \bigg) =: F_{r}({\dot \theta}_i).
\end{align*}
Note that $F_r(x) =  c\tanh^{-1}(x/c)$ is also odd and monotonically increasing on $(-c, c)$. Thus, the second approximation of \eqref{C-6} becomes,
\begin{align} \label{C-10}
F_r \big( \dot\theta_i \big) = \nu_i +\frac{\kappa}{N} \sum_{j=1}^N \sin(\theta_j -\theta_i), \quad \forall ~t\geq0, \quad \forall ~i=1,\cdots, N,
\end{align}
where $F_r \in {\mathcal C}^1(-c, c ~;\bbr)$ is monotone increasing odd function whose image is $\bbr$. 

Finally, we present an energy estimate and close this subsection. Recall that
\begin{equation} \label{C-12}
F \big( \dot\theta_i \big) = \nu_i +\frac{\kappa}{N} \sum_{j=1}^N \sin(\theta_j -\theta_i), \quad \forall~i=1,\cdots,N,
\end{equation}
where $F$ satisfies the structural conditions \eqref{A-2}. Then, we define an energy functional ${\mathcal E}_F$:
\begin{equation} \label{C-13}
{\mathcal E}_{F}(t) := \sum_{i=1}^N \int_1^{\gamma_i(t)} \frac{L^2}{x^3} {\mathcal F}(x) dx, \quad \forall~t\geq0,
\end{equation}
where $\gamma_i$ and ${\mathcal F}$ are defined by
\begin{equation} \label{C-14}
\gamma_i(t) := \frac{1}{\sqrt{1-\frac{\dot\theta_i(t)^2}{L^2}}}, \quad {\mathcal F}(x) :=  F^{\prime} \bigg( L\sqrt{1-\frac{1}{x^2}} \bigg).
\end{equation}
Since the integrand of \eqref{C-13} is nonnegative and $\gamma_i \geq 1$, ${\mathcal E}_F$ is also nonnegative. Next, we show that ${\mathcal E}_F$ satisfies a dissipative estimate. 
\begin{proposition} \label{P3.1}
Let $\theta_i$ be a global smooth solution to \eqref{C-12}. Then, we have
\[
\frac{d{\mathcal E}_F}{dt} = -\frac{\kappa}{2N}\sum_{i, j=1}^N \cos(\theta_j-\theta_i) \big( \dot\theta_i-\dot\theta_j \big)^2, \quad \forall~t > 0.
\]
\end{proposition}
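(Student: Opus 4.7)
The plan is to differentiate $\mathcal{E}_F$ directly via the fundamental theorem of calculus, compute the derivative of the Lorentz-type factor $\gamma_i$, use the key algebraic simplification $L\sqrt{1-1/\gamma_i^2}=|\dot\theta_i|$, and then symmetrize the resulting double sum against $i\leftrightarrow j$.

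First, I would differentiate the integral defining $\mathcal{E}_F$ in \eqref{C-13} to get
\[
\frac{d\mathcal{E}_F}{dt}=\sum_{i=1}^N \frac{L^2}{\gamma_i^3}\,\mathcal{F}(\gamma_i)\,\dot\gamma_i.
\]
A direct calculation from \eqref{C-14} gives
\[
\dot\gamma_i=\gamma_i^3\,\frac{\dot\theta_i\ddot\theta_i}{L^2},
\]
and at $x=\gamma_i$ one has $L\sqrt{1-1/\gamma_i^2}=|\dot\theta_i|$, so using that $F'$ is even (because $F$ is odd by \eqref{A-2}) we obtain $\mathcal{F}(\gamma_i)=F'(|\dot\theta_i|)=F'(\dot\theta_i)$. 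Plugging these in, the prefactors $L^2$ and $\gamma_i^{\pm 3}$ cancel, leaving the clean identity
\[
\frac{d\mathcal{E}_F}{dt}=\sum_{i=1}^N \dot\theta_i\,F'(\dot\theta_i)\,\ddot\theta_i.
\]

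Next, I would differentiate \eqref{C-12} in $t$, which yields
\[
F'(\dot\theta_i)\,\ddot\theta_i=\frac{\kappa}{N}\sum_{j=1}^N \cos(\theta_j-\theta_i)(\dot\theta_j-\dot\theta_i).
\]
Substituting this back gives
\[
\frac{d\mathcal{E}_F}{dt}=\frac{\kappa}{N}\sum_{i,j=1}^N \dot\theta_i \cos(\theta_j-\theta_i)(\dot\theta_j-\dot\theta_i).
\]
Finally I would swap the indices $i\leftrightarrow j$ (using $\cos(\theta_j-\theta_i)=\cos(\theta_i-\theta_j)$) and average the two resulting expressions; the standard symmetrization produces the square $(\dot\theta_i-\dot\theta_j)^2$ together with an overall minus sign and a factor of $1/2$, yielding exactly the desired formula.

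There is essentially no genuine obstacle here; the only place that requires any care is verifying the algebraic identity $\mathcal{F}(\gamma_i)=F'(\dot\theta_i)$, which relies on the oddness of $F$ imposed in \eqref{A-2}, and ensuring that the regime $|\dot\theta_i|<L$ is maintained so that $\gamma_i$ and the change of variable $x=\gamma_i(t)$ are well defined along the smooth solution.
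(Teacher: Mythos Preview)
Your proof is correct and follows essentially the same approach as the paper: both compute $\frac{d\mathcal{E}_F}{dt}=\sum_i F'(\dot\theta_i)\dot\theta_i\ddot\theta_i$ via the identities $\dot\gamma_i=\gamma_i^3\dot\theta_i\ddot\theta_i/L^2$ and $\mathcal{F}(\gamma_i)=F'(\dot\theta_i)$ (using that $F'$ is even), then differentiate \eqref{C-12} and symmetrize in $i,j$. The only difference is the order of presentation---the paper first differentiates \eqref{C-12} and symmetrizes, then identifies the left side as $d\mathcal{E}_F/dt$, whereas you do the reverse.
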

\begin{proof}
We differentiate \eqref{C-12}, multiply $\dot\theta_i$, and sum the resulting relation with respect to $i$ to obtain
\begin{align} 
\begin{aligned} \label{C-15}
&\sum_{i=1}^N F' \big( \dot\theta_i \big) \dot\theta_i \ddot\theta_i = \frac{\kappa}{N}\sum_{i, j=1}^N \cos(\theta_j-\theta_i) \big( \dot\theta_j-\dot\theta_i \big) \dot\theta_i = -\frac{\kappa}{2N}\sum_{i, j=1}^N \cos(\theta_i-\theta_j) \big( \dot\theta_i-\dot\theta_j \big)^2.
\end{aligned}
\end{align}
Now, we claim:
\[  \sum_{i=1}^n F' \big( \dot\theta_i \big) \dot\theta_i \ddot\theta_i  = \frac{d{\mathcal E}_F}{dt}. \]
It follows from \eqref{C-14} and $F^{\prime}(-\omega) = F^{\prime}(\omega)$ that 
\begin{align}
\begin{aligned} \label{C-16}
& {\mathcal F}(\gamma_i) = F^{\prime} \bigg( L\sqrt{1-\frac{1}{\gamma_i^2}} \bigg) = F'\big(\big| \dot\theta_i \big|\big) = F'\big( \dot\theta_i \big), \quad \dot\gamma_i =  \bigg( 1 - \frac{{\dot \theta}_i^2}{L^2} \bigg)^{-\frac{3}{2}}\frac{\dot\theta_i \ddot\theta_i}{L^2} = \gamma_i^3 \frac{\dot\theta_i \ddot\theta_i}{L^2}.
\end{aligned}
\end{align}
Then, we use relations \eqref{C-16} to see
\begin{equation} \label{C-17}
\sum_{i=1}^N F' \big( \dot\theta_i \big) \dot\theta_i \ddot\theta_i = \sum_{i=1}^N  \frac{L^2 \mathcal F( \gamma_i )}{\gamma_i^3} \dot\gamma_i = \frac{d{\mathcal E}_F}{dt}.
\end{equation}
Finally, we combine \eqref{C-15} and \eqref{C-17} to find the desired result.
\end{proof}

\subsection{A gradient-like flow framework} \label{sec:3.2}
In this subsection, we present a gradient-like flow framework which will be used for \eqref{A-5} in Section \ref{sec:4}. We first introduce a useful lemma.

\begin{lemma} \label{L3.2}
\emph{(Barbalat’s lemma \cite{B})}
Let $f : [0, \infty) \to \bbr$ be a continuous function. The the following assertions hold.
\begin{enumerate}
\item
If $f$ is uniformly continuous and satisfies $\int_0^\infty f(t)dt < \infty$, then one has
\[
\lim_{t\to\infty} f(t) = 0.
\]
\item
If $f$ satisfies $\lim_{t \to \infty} f(t) = \alpha \in \bbr$ and $f^{\prime}$ is uniformly continuous, then one has 
\[ \lim_{t \to \infty} f^{\prime}(t) = 0. \]
\end{enumerate}
\end{lemma}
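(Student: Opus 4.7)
The plan is to establish assertion (1) by contradiction and then derive (2) as an immediate consequence. For (1), I would assume $f(t) \not\to 0$ as $t \to \infty$, so that there exist $\epsilon_0 > 0$ and a sequence $t_n \nearrow \infty$ with $|f(t_n)| \geq \epsilon_0$ for every $n$. The uniform continuity of $f$ then furnishes a $\delta > 0$, independent of $n$, such that $|f(t) - f(t_n)| < \epsilon_0/2$ whenever $|t - t_n| \leq \delta$. On each window $[t_n, t_n + \delta]$ this forces $|f(t)| \geq \epsilon_0/2$, and a short argument rules out any sign change inside the window, since otherwise $|f(t)-f(t_n)|$ would exceed $\epsilon_0$.

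After passing to a subsequence so that the windows $[t_n, t_n + \delta]$ are pairwise disjoint, the sign control yields
\[
\left| \int_{t_n}^{t_n + \delta} f(t)\,dt \right| \geq \frac{\epsilon_0 \delta}{2}
\]
for every $n$, which violates the Cauchy criterion for the convergent improper integral $\int_0^\infty f(t)\,dt$ and completes the proof of (1). For (2), I would set $g := f^{\prime}$ and observe that the fundamental theorem of calculus combined with the hypothesis $\lim_{t \to \infty} f(t) = \alpha$ gives
\[
\int_0^T g(s)\,ds = f(T) - f(0) \longrightarrow \alpha - f(0), \qquad T \to \infty,
\]
so the improper integral $\int_0^\infty g(s)\,ds$ converges. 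Since $g$ is uniformly continuous by hypothesis, applying (1) to $g$ yields $\lim_{t\to\infty} f^{\prime}(t) = 0$, which is the desired conclusion.

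The only delicate point is the one highlighted in the windowed estimate. Convergence of $\int_0^\infty f(t)\,dt$ as an improper Riemann integral is strictly weaker than absolute integrability of $f$, so a pointwise lower bound on $|f|$ over a window of positive measure does not by itself produce a contradiction: cancellation across the window could still keep the integral small. The essential trick is therefore to use uniform continuity twice, not only to obtain $|f(t)| \geq \epsilon_0/2$ on each window but also to pin down the \emph{sign} of $f$ throughout it, converting the pointwise bound into a non-cancelling integral estimate that is incompatible with the Cauchy condition for the convergent improper integral.
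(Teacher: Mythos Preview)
Your argument is correct and is the standard proof of Barbalat's lemma. Note, however, that the paper does not supply its own proof of this statement: Lemma~\ref{L3.2} is simply quoted from the reference~\cite{B} and used as a black box, so there is no in-paper argument to compare against. Your proof of (1) by contradiction via uniform-continuity windows and the Cauchy criterion, followed by the reduction of (2) to (1) through the fundamental theorem of calculus, is exactly the classical route; the emphasis you place on sign control within each window (so that the pointwise lower bound on $|f|$ becomes a genuine lower bound on $\big|\int_{t_n}^{t_n+\delta} f\big|$) is the right subtlety to flag.
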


\noindent Let $\varphi$ be a monotonically increasing odd function. Then for such $\varphi$, we define a vector-valued function on $\bbr^N$:
\[ \Phi(x_1, \cdots, x_N) = (\varphi(x_1), \cdots, \varphi(x_N)). \]
Now we consider an autonomous system:
\begin{equation}
\begin{cases} \label{C-18}
\displaystyle \dot X = \Phi \Big( -\nabla_X V(X) +\alpha\mathbf 1 \Big), \quad \forall~t > 0, \\
\displaystyle \alpha\in\bbr, \quad \mathbf 1 = (1, \cdots, 1) \in\bbr^N,
\end{cases}
\end{equation}
where $V$ is a real-valued ${\mathcal C}^1$ potential function such that 
\begin{equation} \label{C-19}
 \mathbf 1 \cdot\nabla_XV = 0. 
\end{equation}
\begin{lemma} \label{L3.3}
Let $X = X(t)\in\bbr^N$ be a solution to \eqref{C-18}-\eqref{C-19}. Then, the following three assertions hold:
\begin{enumerate}
\item $V(X(t))$ is monotonically decreasing in $t$.
\vspace{0.1cm}
\item 
Suppose $V(X(t))$ is bounded below. Then there exists $V^\infty\in\bbr$ such that 
\[ \lim_{t\to\infty} V(X(t)) = V^\infty. \]
\item 
Suppose $\frac{d}{dt} V(X(t))$ is uniformly continuous. Then we have, for all $i, j=1,\cdots, N$,
\[
\lim_{t\to\infty} (v_i(t) -v_j(t)) \big[ \varphi(v_i(t)-\alpha) -\varphi(v_j(t)-\alpha) \big] = 0,
\]
where $\nabla_X V(X(t)) = (v_1(t), \cdots, v_n(t))$.
\end{enumerate} 
\end{lemma}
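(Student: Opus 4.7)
The plan is to derive a closed-form expression for $\tfrac{d}{dt}V(X(t))$ along trajectories that is manifestly non-positive, and then feed the resulting integrability into Barbalat's lemma. All three claims will drop out in succession from this single identity.

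For (1), I would begin with the chain rule: along a solution of \eqref{C-18},
\[
\frac{d}{dt}V(X(t)) = \nabla_X V \cdot \dot X = \sum_{i=1}^N v_i \,\varphi(\alpha - v_i).
\]
The key structural input is the constraint \eqref{C-19}, which forces $\sum_i v_i = 0$ at every time. Using this, I would subtract the vanishing mean term $\varphi(\alpha)\sum_i v_i$ and symmetrize in $(i,j)$ to recast the sum in pair form
\[
\sum_{i=1}^N v_i \,\varphi(\alpha - v_i) = \frac{1}{2N}\sum_{i,j=1}^N (v_i - v_j)\bigl[\varphi(\alpha - v_i) - \varphi(\alpha - v_j)\bigr],
\]
after which the oddness of $\varphi$ flips the arguments so that the identity reads
\[
\frac{dV}{dt} = -\frac{1}{2N}\sum_{i,j=1}^N (v_i - v_j)\bigl[\varphi(v_i - \alpha) - \varphi(v_j - \alpha)\bigr].
\]
Since $\varphi$ is monotonically increasing, each pair contribution is non-negative, so $\tfrac{dV}{dt} \leq 0$; this proves (1).

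Item (2) is immediate from (1): a real-valued function that is monotonically non-increasing and bounded below converges to some finite limit $V^\infty \in \bbr$.

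For (3), combining (1) and (2) yields
\[
\int_0^\infty \left(-\frac{dV}{dt}\right)\,dt = V(X(0)) - V^\infty < \infty,
\]
so $\tfrac{dV}{dt}$ is integrable on $[0,\infty)$ and, by hypothesis, uniformly continuous. Barbalat's lemma (Lemma \ref{L3.2}(1)) then gives $\tfrac{dV}{dt}\to 0$ as $t\to\infty$. Since $-\tfrac{dV}{dt}$ is a sum of non-negative pair contributions (from the identity established in (1)), each individual term must vanish in the limit, yielding
\[
\lim_{t\to\infty}(v_i(t) - v_j(t))\bigl[\varphi(v_i(t) - \alpha) - \varphi(v_j(t) - \alpha)\bigr] = 0
\]
for every pair $i, j$. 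The only delicate step is the symmetrization together with the sign flip produced by the oddness of $\varphi$; beyond that, the argument is a direct application of Barbalat's lemma. I do not anticipate a substantive obstacle, since the constraint $\mathbf{1}\cdot \nabla_X V = 0$ is precisely what ensures the drift $\alpha\mathbf{1}$ makes no contribution to the variation of $V$ and makes the dissipation identity work.
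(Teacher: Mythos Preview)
Your proposal is correct and follows essentially the same route as the paper: derive the pairwise dissipation identity using $\sum_i v_i=0$ and the oddness of $\varphi$, deduce monotonicity, then invoke Barbalat's lemma and peel off each non-negative summand. The only cosmetic difference is that the paper applies the oddness of $\varphi$ before symmetrizing (writing $\alpha\sum_i\varphi(-v_i+\alpha)=\tfrac{1}{N}\sum_j(v_j-\alpha)\sum_i\varphi(v_i-\alpha)$ via $\sum_j v_j=0$), whereas you symmetrize first and flip signs afterward; and the paper invokes part~(2) of Lemma~\ref{L3.2} (convergence of $V$ plus uniform continuity of $\dot V$) rather than part~(1) (integrability plus uniform continuity), but these are equivalent here since both routes rest on the conclusion of item~(2).
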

\begin{proof}
\noindent (i)~We set 
\[ v_i := \frac{\partial V}{\partial x_i}, \quad \forall~i = 1, \cdots, N. \]
Then, it follows from \eqref{C-19} that 
\begin{equation} \label{C-20}
 \sum_{j=1}^{N} v_j = 0.
 \end{equation}
Now we use \eqref{C-18} and \eqref{C-20} to find 
\begin{align} \label{C-21}
\begin{aligned}
\frac{d}{dt} V(X) &= \nabla_X V(X) \cdot {\dot X} = \nabla_X V(X) \cdot \Phi \Big(  -\nabla_X V(X) +\alpha\mathbf 1 \Big) \\
&= \sum_{i=1}^N  v_i \varphi (-v_i +\alpha) = \sum_{i=1}^N (v_i -\alpha) \varphi(-v_i +\alpha) + \alpha \sum_{i=1}^N \varphi(-v_i +\alpha) \\
&= -\sum_{i=1}^N (v_i -\alpha) \varphi(v_i -\alpha) +\frac{1}{N} \sum_{j=1}^N  (v_j-\alpha) \sum_{i=1}^N \varphi(v_i -\alpha) \\
& = -\frac{1}{2N} \sum_{i, j=1}^N (v_i -v_j)  \Big( \varphi(v_i -\alpha) -\varphi(v_j-\alpha) \Big) \leq 0,
\end{aligned}
\end{align}
which yields our first desired result.

\vspace{.2cm}

\noindent (ii)~Since $V(X)$ is nonincreasing and bounded below, there exists $V^{\infty}$ such that 
\[ V^{\infty} = \lim_{t \to \infty} V(X(t)). \]

\noindent (iii)~Suppose $\frac{d}{dt} V(X(t))$ is uniformly continuous.  Then, we apply Lemma \ref{L3.2} to get 
\begin{align*}
\lim_{t\to\infty} \frac{d}{dt} V(X(t)) = 0.
\end{align*}
Note that since $\varphi$ is monotonically increasing, we have
\begin{align*}
(v_i(t) -v_j(t)) [\varphi(v_i(t)-\alpha) -\varphi(v_j(t)-\alpha)] \geq 0, \quad \forall ~i, j=1,\cdots, N.
\end{align*}
This and \eqref{C-21} imply the desired estimate:
\begin{align*}
\lim_{t\to\infty} (v_i(t) -v_j(t)) [\varphi(v_i(t)-\alpha) -\varphi(v_j(t)-\alpha)] = 0, \quad \forall ~i, j=1,\cdots,N.
\end{align*}
\end{proof}

\section{Emergent collective dynamics} \label{sec:4}
\setcounter{equation}{0}
In this section, we study emergent properties for the following Cauchy problem:
\begin{align*}
\begin{cases}
\displaystyle F\big( \dot\theta_i \big) = \nu_i +\frac{\kappa}{N} \sum_{j=1}^N \sin(\theta_j -\theta_i), \quad \forall~t>0, \\
\displaystyle \theta_i(0) = \theta_i^{in}, \quad \forall~i=1,\cdots, N,
\end{cases}
\end{align*}
where $\nu_i$ is natural frequency, $\kappa$ is coupling strength, and $F\in {\mathcal C}^1(-c, c~;\bbr)$ satisfies the structural conditions \eqref{A-2} with image $\bbr$. Then, one can define $G := F^{-1}:\bbr\longrightarrow(-c, c)$, which is also monotonically increasing odd function, and system \eqref{C-1}$_1$ can also be written as
\begin{align*}
\dot\theta_i = G\bigg( \nu_i +\frac{\kappa}{N} \sum_{j=1}^N \sin(\theta_j -\theta_i) \bigg).
\end{align*}
Since $F'$ is strictly positive, $G$ is also continuously differentiable.

\subsection{A homogeneous ensemble} \label{sec:4.1}
Consider a homogeneous RK ensemble consisting of identical oscillators with the same natural frequency:
\begin{align*}
\nu_i = \nu, \quad \forall ~i=1,\cdots, N.
\end{align*}
Then, the dynamics of the homogeneous ensemble $\Theta = (\theta_1, \cdots, \theta_n)$ is governed by
\begin{align*}
\begin{cases}
\displaystyle F\big( \dot\theta_i \big) = \nu +\frac{\kappa}{N} \sum_{j=1}^N \sin(\theta_j -\theta_i), \quad \forall~t>0, \\
\displaystyle \theta_i(0) = \theta_i^{in}, \quad \forall~i=1,\cdots, N,
\end{cases}
\end{align*}
or equivalently
\begin{align} \label{D-4}
\begin{cases}
\displaystyle \dot\theta_i = G\bigg( \nu +\frac{\kappa}{N} \sum_{j=1}^N \sin(\theta_j -\theta_i) \bigg), \quad \forall~t>0, \\
\displaystyle \theta_i(0) = \theta_i^{in}, \quad \forall~i=1,\cdots,N.
\end{cases}
\end{align}

\subsubsection{A differential inequatlity approach} \label{sec:4.1.1}
Our first result is concerned with the exponential synchronization of a homogeneous Kuramoto ensemble to the one-point cluster configuration following two steps:
\begin{itemize}
\item Step A (Half circle configuration is an invariant set): by continuity argument and structural conditions of $G$ and sinusoidal interactions, one has 
\[   {\mathcal D}(\Theta^{in}) < \pi \quad \Longrightarrow \quad {\mathcal D}(\Theta(t)) \leq{\mathcal D}(\Theta^{in}). \]

\vspace{.2cm}

\item Step B (Derivation of exponential synchronization): we derive a Gronwall's inequality for the phase diameter ${\mathcal D}(\Theta)$:
\[
\frac{d}{dt} {\mathcal D}(\Theta) \leq -\frac{\kappa\sin({\mathcal D}(\Theta^{in}))}{{\mathcal D}(\Theta^{in})} \Big( \min_{\nu-\kappa\leq\omega \leq\nu+\kappa}G'(\omega) \Big) {\mathcal D}(\Theta), \quad \forall ~t>0.
\]
This yields exponential convergence of ${\mathcal D}(\Theta)$ toward zero.
\end{itemize}

\begin{lemma} \label{L4.1}
Suppose initial data and coupling strength satisfy
\[   {\mathcal D}(\Theta^{in}) < \pi \quad \mbox{and} \quad \kappa > 0, \]
and let $\Theta = \Theta(t)$ be a global smooth solution to \eqref{D-4}. Then, one has
\begin{align*}
{\mathcal D}(\Theta(t)) \leq{\mathcal D}(\Theta^{in}), \quad \forall ~t>0.
\end{align*}
\end{lemma}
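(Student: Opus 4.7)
The plan is to run a standard extremal-index argument together with a continuity (bootstrap) step. Introduce the envelopes
\[
M(t) := \max_{1 \le i \le N} \theta_i(t), \qquad m(t) := \min_{1 \le i \le N} \theta_i(t),
\]
so that ${\mathcal D}(\Theta(t)) = M(t) - m(t)$. Both $M$ and $m$ are locally Lipschitz in $t$, hence differentiable almost everywhere, and at any differentiability point there exist indices $i_M(t)$ and $i_m(t)$ with $\theta_{i_M(t)}(t) = M(t)$, $\theta_{i_m(t)}(t) = m(t)$, and $\dot M(t) = \dot\theta_{i_M(t)}(t)$, $\dot m(t) = \dot\theta_{i_m(t)}(t)$.

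The heart of the argument is the following one-sided estimate. Define the ``good set''
\[
T^* := \sup\{\, t \ge 0 : {\mathcal D}(\Theta(s)) < \pi \text{ for all } s \in [0,t]\,\}.
\]
Since ${\mathcal D}(\Theta^{in}) < \pi$ and ${\mathcal D}(\Theta(\cdot))$ is continuous, $T^* > 0$. At an a.e. point $t \in [0,T^*)$, using \eqref{D-4} I would compute
\[
\frac{d}{dt}{\mathcal D}(\Theta(t)) = G\!\left(\nu + \frac{\kappa}{N}\sum_{j=1}^N \sin(\theta_j - \theta_{i_M})\right) - G\!\left(\nu + \frac{\kappa}{N}\sum_{j=1}^N \sin(\theta_j - \theta_{i_m})\right).
\]
For every $j$ one has $\theta_j - \theta_{i_M} \in [-{\mathcal D}(\Theta(t)),0] \subset (-\pi,0]$, so $\sin(\theta_j - \theta_{i_M}) \le 0$; symmetrically $\theta_j - \theta_{i_m} \in [0,{\mathcal D}(\Theta(t))] \subset [0,\pi)$, so $\sin(\theta_j - \theta_{i_m}) \ge 0$. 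Consequently, the argument of the first $G$ is $\le \nu$ while that of the second $G$ is $\ge \nu$, and by monotonicity of $G = F^{-1}$ we get
\[
\frac{d}{dt}{\mathcal D}(\Theta(t)) \le 0 \qquad \text{a.e. on } [0,T^*).
\]

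Integrating yields ${\mathcal D}(\Theta(t)) \le {\mathcal D}(\Theta^{in})$ on $[0,T^*)$, and by continuity the same inequality holds at $t = T^*$ if $T^* < \infty$. But then ${\mathcal D}(\Theta(T^*)) \le {\mathcal D}(\Theta^{in}) < \pi$, which contradicts the maximality of $T^*$. Hence $T^* = \infty$ and the desired bound holds for all $t \ge 0$.

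The only genuinely delicate point is the non-smoothness of $M$ and $m$ at times when the extremal index changes. I would handle this either by invoking the standard Danskin/Rademacher lemma on max-type Lipschitz functions, or by comparing ${\mathcal D}(\Theta(t+h)) - {\mathcal D}(\Theta(t))$ directly via Dini derivatives and the mean value theorem for the same pair $(i_M(t),i_m(t))$ over a short interval. Both routes reduce the problem to the one-sided sign computation above, which is the true engine of the lemma.
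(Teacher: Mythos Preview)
Your proposal is correct and follows essentially the same architecture as the paper's proof: a continuity/bootstrap argument on the time interval where $\mathcal{D}(\Theta)<\pi$, extremal indices $\theta_M,\theta_m$, and the computation showing $\frac{d}{dt}\mathcal{D}(\Theta)\le 0$ there. The only difference is that the paper, instead of your direct sign-and-monotonicity observation, applies the mean value theorem for $G$ together with $\sin x+\sin y>\sin(x+y)$ on $(0,\pi)$ to obtain the stronger quantitative inequality $\frac{d}{dt}\mathcal{D}(\Theta)\le -\kappa\big(\min_{[\nu-\kappa,\nu+\kappa]}G'\big)\sin(\mathcal{D}(\Theta))$, which it then recycles for the exponential decay rate in Theorem~\ref{T4.1}; your cleaner argument suffices for Lemma~\ref{L4.1} itself but would need that extra step later.
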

\begin{proof}
For a proof, we use continuous induction and we study the evolution of the diameter ${\mathcal D}(\Theta)$. For this, we define a set ${\mathcal T}$ and its supremum $T^*$:
\begin{align*}
& {\mathcal T}:= \{ T \in (0, \infty]~:~{\mathcal D}(\Theta(t)) < \pi, \quad \forall ~t \in [0, T) \}, \quad T^*: =  \sup{\mathcal T}.
\end{align*}
By the assumption on ${\mathcal D}(\Theta^{in})$ and the continuity of phase diameter, there exists $\delta > 0$ such that 
\[ {\mathcal D}(\Theta(t)) < \pi, \quad \forall~t \in [0, \delta). \]
Hence, the set ${\mathcal T}$ is nonempty. Now, we claim:
\[ T^* = \infty. \]
Suppose not, i.e., $T^* < \infty$ and one has
\[  {\mathcal D}(\Theta(t))  < \pi, \quad \forall~t \in [0, T^*). \]
Next, we introduce time-dependent extremal indices $M$ and $m$ such that 
\begin{align*}
\theta_M = \max_{1\leq i \leq N} \theta_i \quad \mbox{and} \quad \theta_m = \min_{1\leq i \leq N} \theta_i.
\end{align*}
For $t\in(0, T^*)$, we use \eqref{D-4} to find
\begin{align} \label{D-5}
\begin{aligned}
\frac{d}{dt} {\mathcal D}(\Theta) &= \dot\theta_M -\dot\theta_m \\
&= G\bigg( \nu +\frac{\kappa}{N} \sum_{j=1}^N \sin(\theta_j -\theta_M) \bigg) -G\bigg( \nu +\frac{\kappa}{N} \sum_{j=1}^N \sin(\theta_j -\theta_m) \bigg) \\
&\leq -\Big( \min_{\nu-\kappa\leq \omega \leq\nu+\kappa}G'(\omega) \Big) \frac{\kappa}{N} \sum_{j=1}^N (\sin(\theta_M-\theta_j)+\sin(\theta_j -\theta_m)) \\
&< -\Big( \min_{\nu-\kappa\leq\omega \leq\nu+\kappa}G'(\omega) \Big) \frac{\kappa}{N} \sum_{j=1}^N \sin(\theta_M-\theta_m) \\
&= -\kappa \Big( \min_{\nu-\kappa\leq \omega \leq\nu+\kappa}G'(\omega) \Big) \sin({\mathcal D}(\Theta)),
\end{aligned}
\end{align}
where we used the fact that
\begin{align*}
\sin x+\sin y > \sin(x+y), \quad \forall ~x, y\in(0, \pi).
\end{align*}
Then in \eqref{D-5}, we use  ${\mathcal D}(\Theta) \in (0, \pi)$ and $G^{\prime} \geq 0$ to see that 
\[ \frac{d}{dt} {\mathcal D}(\Theta)  \leq 0, \quad \forall~t \in (0, T^*). \]
This yields
\[ {\mathcal D}(\Theta(t)) \leq  {\mathcal D}(\Theta^{in}) < \pi, \quad \forall~t \in (0, T^*). \]
Therefore, there exists a positive constant $\delta^{\prime}$ such that 
\[ {\mathcal D}(\Theta(t))  < \pi, \quad \forall~t \in [0, T^* + \delta^{\prime}), \]
which contradicts to the definition of $T^*$. Hence $T^* = \infty$ and we have
\begin{align*}
{\mathcal D}(\Theta(t)) \leq {\mathcal D}(\Theta^{in}),   \quad  \forall~t \in [0, \infty).
\end{align*}
\end{proof}

Now, we are ready to present an exponential synchronization for restricted initial data lying in a half circle. 
\begin{theorem} \label{T4.1}
Suppose that initial data and coupling strength satisfy
\[   {\mathcal D}(\Theta^{in}) < \pi \quad \mbox{and} \quad \kappa > 0, \]
and let $\Theta = \Theta(t)$ be a global smooth solution to \eqref{D-4}. Then, complete synchronization occurs exponentially fast, i.e., there exists a positive constant $\Lambda = \Lambda(\Theta^{in}, \nu, \kappa, G^{\prime})$ such that  
\[ {\mathcal D}(\Theta(t)) \leq e^{-\Lambda t} {\mathcal D}(\Theta^{in}), \quad \forall~t > 0. \]
\end{theorem}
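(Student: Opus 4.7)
The plan is to extract the differential inequality already established in the proof of Lemma \ref{L4.1} and combine it with the elementary monotonicity of $\sin(x)/x$ on $(0,\pi)$ to convert the dissipation into a linear one suitable for Gronwall. Specifically, Lemma \ref{L4.1} not only secures the a priori bound $\mathcal{D}(\Theta(t))\le\mathcal{D}(\Theta^{in})<\pi$, but inside its proof (see \eqref{D-5}) it establishes
\[
\frac{d}{dt}\mathcal{D}(\Theta(t))\;\le\;-\kappa\Bigl(\min_{\nu-\kappa\le\omega\le\nu+\kappa}G'(\omega)\Bigr)\sin\bigl(\mathcal{D}(\Theta(t))\bigr),\quad t>0.
\]

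First I would verify that the constant $m_{G}:=\min_{\omega\in[\nu-\kappa,\nu+\kappa]}G'(\omega)$ is strictly positive. Since $G=F^{-1}$ with $F\in\mathcal{C}^1(-L,L;\mathbb{R})$, $F'>0$, and image $\mathbb{R}$, one has $G'=1/(F'\circ G)>0$ on $\mathbb{R}$, and continuity on the compact interval $[\nu-\kappa,\nu+\kappa]$ yields $m_{G}>0$. Next, using the a priori bound $\mathcal{D}(\Theta(t))\in[0,\mathcal{D}(\Theta^{in})]$ together with the monotonicity of $\sin(x)/x$ on $(0,\pi)$, I obtain for all $t\ge 0$,
\[
\sin\bigl(\mathcal{D}(\Theta(t))\bigr)\;\ge\;\frac{\sin(\mathcal{D}(\Theta^{in}))}{\mathcal{D}(\Theta^{in})}\,\mathcal{D}(\Theta(t)).
\]
Combining these two observations and setting
\[
\Lambda\;:=\;\kappa\, m_{G}\,\frac{\sin(\mathcal{D}(\Theta^{in}))}{\mathcal{D}(\Theta^{in})}\;>\;0,
\]
the differential inequality becomes $\tfrac{d}{dt}\mathcal{D}(\Theta(t))\le -\Lambda\,\mathcal{D}(\Theta(t))$, and Gronwall's lemma closes the argument.

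I do not anticipate a genuine obstacle here; the proof is essentially a bookkeeping exercise on top of Lemma \ref{L4.1}. The only substantive points are tracking the dependence of $\Lambda$ on the structural data (in particular, its dependence on $G'$ through $m_{G}$, which relies on the a priori confinement of the argument of $G$ within the compact window $[\nu-\kappa,\nu+\kappa]$) and handling the degenerate case $\mathcal{D}(\Theta^{in})=0$, for which the solution is the rigid rotation $\theta_i(t)=\theta_i^{in}+G(\nu)t$ and the bound holds trivially with any $\Lambda>0$.
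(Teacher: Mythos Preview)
Your proposal is correct and follows essentially the same route as the paper: invoke the differential inequality \eqref{D-5} from Lemma~\ref{L4.1}, use the lower bound $\sin x\ge \frac{\sin(\mathcal D(\Theta^{in}))}{\mathcal D(\Theta^{in})}\,x$ on $[0,\mathcal D(\Theta^{in})]$, and apply Gr\"onwall with the same constant $\Lambda$. You even add two details the paper leaves implicit, namely the justification that $m_G>0$ via $G'=1/(F'\circ G)$ on a compact interval, and the trivial degenerate case $\mathcal D(\Theta^{in})=0$.
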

\begin{proof}
We use \eqref{D-5} and
\[ \sin x \leq \frac{\sin({\mathcal D}(\Theta^{in}))}{{\mathcal D}(\Theta^{in})} x, \quad \forall~x \in [0, {\mathcal D}(\Theta^{in})] \]
to obtain
\begin{align*}
\frac{d}{dt} {\mathcal D}(\Theta) &\leq -\kappa \Big( \min_{\nu-\kappa\leq \omega \leq\nu+\kappa}G'(\omega) \Big) \sin({\mathcal D}(\Theta)) \\
&\leq -\frac{\kappa\sin({\mathcal D}(\Theta^{in}))}{{\mathcal D}(\Theta^{in})} \Big( \min_{\nu-\kappa\leq\omega \leq\nu+\kappa}G'(\omega) \Big) {\mathcal D}(\Theta), \quad \forall~t>0.
\end{align*}
Then, Gr\"onwall's lemma yields the desired estimate:
\begin{align*}
& {\mathcal D}(\Theta(t)) \leq e^{-\Lambda t}{\mathcal  D}(\Theta^{in}), \quad \Lambda := \frac{\kappa\sin({\mathcal D}(\Theta^{in}))}{{\mathcal D}(\Theta^{in})} \Big( \min_{\nu-\kappa\leq\omega \leq\nu+\kappa}G'(\omega) \Big) > 0,
\end{align*}
\end{proof}

\subsubsection{A gradient flow approach} \label{sec:4.1.2}
In this part, we extend a formation of complete synchronization for generic initial data using a gradient-like formation of \eqref{D-4}. Motivated by the gradient flow formulation of the Kuramoto model, the potential function $V$ can be written as
\[ V(\Theta) := -\frac{\kappa}{2N} \sum_{i, j=1}^N \cos(\theta_i -\theta_j) \]
and for all $i= 1, \cdots, N$,
\begin{align*}
\dot\theta_i = G\bigg( \nu +\frac{\kappa}{N} \sum_{j=1}^N \sin(\theta_j -\theta_i) \bigg) \Longleftrightarrow \dot\theta_i = G \bigg( \nu -\frac{\partial V}{\partial \theta_i}     \bigg).
\end{align*}
In order to use Lemma \ref{L3.3}, we set 
\begin{align} \label{D-5-2}
\begin{aligned}
& X(t) := \Theta(t), \quad \alpha := \nu, \quad \varphi := G, \quad V(\Theta) := -\frac{\kappa}{2N} \sum_{i, j=1}^N \cos(\theta_i -\theta_j).
\end{aligned}
\end{align}
As an application of Lemma \ref{L3.3}, one has the following complete synchronization. 
\begin{lemma} \label{L4.2}
Let $\Theta = \Theta(t)$ be a global smooth solution to \eqref{D-4}. Then, we have, for all $t>0$ and $i=1,\cdots,N$,
\[ \frac{d}{dt} \sum_{i, j=1}^N \cos(\theta_i -\theta_j) \geq 0, \quad \lim_{t\to\infty} \sum_{j=1}^n \sin(\theta_i-\theta_j) = 0.
\]
\end{lemma}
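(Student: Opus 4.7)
The plan is to invoke Lemma \ref{L3.3} with the identifications in \eqref{D-5-2}: $X = \Theta$, $\alpha = \nu$, $\varphi = G$, and $V(\Theta) = -\frac{\kappa}{2N}\sum_{i,j}\cos(\theta_i - \theta_j)$. First I would verify the structural hypothesis $\mathbf{1}\cdot\nabla_\Theta V = 0$ by a direct computation,
\begin{equation*}
v_k := \frac{\partial V}{\partial \theta_k} = \frac{\kappa}{N}\sum_{j=1}^N \sin(\theta_k - \theta_j),
\end{equation*}
so that $\sum_k v_k = 0$ by antisymmetry, and then observe that \eqref{D-4} is exactly $\dot\Theta = \Phi(-\nabla_\Theta V + \nu\mathbf{1})$.

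The first assertion now follows at once from Lemma \ref{L3.3}(i): $V(\Theta(t))$ is nonincreasing in $t$, and since $\sum_{i,j}\cos(\theta_i - \theta_j) = -\frac{2N}{\kappa}V(\Theta)$, it is nondecreasing in $t$.

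For the second assertion I would apply parts (ii)-(iii) of Lemma \ref{L3.3}. Trivially $|V|\leq \kappa N/2$, so part (ii) gives convergence $V(\Theta(t)) \to V^\infty$. To apply part (iii), I need $\frac{d}{dt}V(\Theta(t))$ to be uniformly continuous: since $\nu - v_i$ stays in the compact set $[\nu - \kappa, \nu + \kappa]$, $\dot\theta_i = G(\nu - v_i)$ is uniformly bounded; differentiating \eqref{D-4} in $t$ then shows $\ddot\theta_i$ is also bounded, so $\frac{d}{dt}V(\Theta) = \sum_i v_i \dot\theta_i$ has a bounded derivative, hence is Lipschitz. Part (iii) therefore yields
\begin{equation*}
\lim_{t\to\infty}\bigl(v_i(t) - v_j(t)\bigr)\bigl[G(v_i(t) - \nu) - G(v_j(t) - \nu)\bigr] = 0, \quad \forall~i,j.
\end{equation*}

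The final step upgrades this to $v_i - v_j \to 0$. Since $v_i - \nu$ remains in the compact set $[-\kappa - |\nu|, \kappa + |\nu|]$ on which $G'$ is continuous and strictly positive, it admits a positive infimum $m > 0$ there, and the mean-value theorem gives $(v_i - v_j)[G(v_i - \nu) - G(v_j - \nu)] \geq m(v_i - v_j)^2$. Hence $v_i(t) - v_j(t) \to 0$ for all $i,j$, and combined with $\sum_i v_i \equiv 0$ this forces each $v_i \to 0$, which is exactly $\sum_{j} \sin(\theta_i - \theta_j) \to 0$. The only real technical point is the uniform-continuity verification for part (iii); everything else is an automatic consequence of the gradient-like structure and strict monotonicity of $G$.
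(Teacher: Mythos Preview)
Your proof is correct and follows essentially the same route as the paper: both identify \eqref{D-4} with the gradient-like system of Lemma~\ref{L3.3} via \eqref{D-5-2}, verify $\sum_i v_i = 0$ by antisymmetry, deduce monotonicity of $V$ for the first claim, and for the second claim establish uniform continuity of $\frac{d}{dt}V(\Theta(t))$ through boundedness of $\dot\theta_i$ and $\ddot\theta_i$, then use strict positivity of $G'$ on the relevant compact interval to upgrade the conclusion of Lemma~\ref{L3.3}(iii) to $v_i - v_j \to 0$ and finally $v_i \to 0$.
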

\begin{proof}
(i) (First relation): By the setting \eqref{D-5-2}, in order to use \eqref{L3.3},  it sufficies to check 
\[
\sum_{i=1}^N \frac{\partial V}{\partial \theta_i} = \frac{\kappa}{N} \sum_{i, j=1}^N \sin(\theta_i -\theta_j) = 0.
\]
We apply Lemma \ref{L3.3} (1)  to see
\begin{equation} \label{D-5-3}
 \frac{d}{dt} V(\Theta(t))=   -\frac{\kappa}{2N} \frac{d}{dt} \sum_{i, j=1}^N \cos(\theta_i -\theta_j) \leq 0,  
\end{equation}
which yields the first desired result.

\vspace{.2cm}

\noindent (ii) (Second relation):~Note that ${\dot \theta}_i$ and $V(\Theta(t))$ have uniform-in-time lower bound:
\begin{align}
\begin{aligned} \label{D-5-4}
& \sup_{0\leq t < \infty} |{\dot \theta}_i(t)| \leq \max \Big \{ G(\nu -\kappa),~G(\nu + \kappa) \Big \}, \\
& \sup_{0\leq t < \infty} \Big\{ |V(\Theta(t))|,~ \Big |\frac{d}{dt} V(\Theta(t)) \Big | \Big \} \leq \frac{\kappa N}{2}.
\end{aligned}
\end{align}
We differentiate \eqref{D-4} with respect to $t$ to get 
\[
{\ddot \theta}_i = G^{\prime} \bigg( \frac{\kappa}{N} \sum_{j=1}^N \sin(\theta_j -\theta_i) \bigg) \cdot \Big(   \frac{\kappa}{N} \sum_{j=1}^N \cos(\theta_j -\theta_i)  ({\dot \theta}_j - {\dot \theta}_i)  \Big).
\]
This yields
\begin{equation} \label{D-5-5}
\sup_{0\leq t < \infty} |{\ddot \theta}_i(t)| \leq \kappa \Big( \max_{-\kappa \leq \omega \leq \kappa} G^{\prime}(\omega) \Big) \cdot \max \Big \{ G(\kappa),~G(-\kappa) \Big \}. 
\end{equation}
On the other hand, it follows from \eqref{D-5-3} that  
\begin{align} \label{D-5-6}
\begin{aligned}
& \frac{d^2}{dt^2} V(\Theta(t)) = \frac{\kappa}{2N} \sum_{i, j=1}^N \Big[ \cos(\theta_i-\theta_j) (\dot\theta_i-\dot\theta_j)^2 +\sin(\theta_i-\theta_j)(\ddot\theta_i -\ddot\theta_j) \Big].
\end{aligned}
\end{align}
In \eqref{D-5-6}, since $\dot\theta_i$ and $\ddot\theta_i$ are uniformly bounded by \eqref{D-5-4} and \eqref{D-5-5}, we have
\[ \sup_{0 \leq t < \infty} \Big| \frac{d^2}{dt^2} V(\Theta(t)) \Big| < \infty. \]
This implies that  $\frac{d}{dt} V(\Theta(t))$ is uniformly continuous and then, by Lemma \ref{L3.2} and Lemma \ref{L3.3}, one has
\begin{align*}
\lim_{t\to\infty}(v_i(t)-v_j(t))(G(v_i(t)-\nu) -G(v_j(t)-\nu)) = 0, \quad \forall ~i, j=1,\cdots, N,
\end{align*}
where
\begin{align*}
v_i := \frac{\kappa}{N} \sum_{k=1}^N \sin(\theta_i -\theta_k), \quad \forall~i=1,\cdots, N.
\end{align*}
On the other hand, since
\begin{align*}
& |v_i -v_j|\cdot|G(v_i-\nu) -G(v_j-\nu)| \geq \bigg( \min_{-\nu-\kappa\leq x\leq -\nu+\kappa} G'(x)\bigg) |v_i -v_j|^2,
\end{align*}
we obtain
\begin{align*}
\lim_{t\to\infty} |v_i -v_j| = 0.
\end{align*}
Finally, it follows from $\sum_{i=1}^N v_i = 0$ that
\begin{align*}
\lim_{t\to\infty} v_i = \lim_{t\to\infty} \frac{1}{N} \sum_{j=1}^N (v_i-v_j) = 0,
\end{align*}
which yields our desired result.
\end{proof}

Next, we analyze the order parameters. For a global solution $\Theta$ to \eqref{D-4}, we introduce order paramerters $(R, \phi)$ by the following relation:
\begin{align} \label{D-6}
Re^{\mathrm i\phi} = \frac{1}{N} \sum_{j=1}^N e^{\mathrm i\theta_j}.
\end{align}
If $R$ is strictly positive for some time interval ${\mathcal I}$, then $\phi$ can be defined smoothly on ${\mathcal I}$. Note that 
\begin{align*}
R^2 &= \frac{1}{N^2} \bigg( \sum_{i=1}^N e^{\mathrm i\theta_i} \bigg) \bigg( \sum_{j=1}^N e^{-\mathrm i\theta_j} \bigg) = \frac{1}{N^2} \sum_{i, j=1}^N e^{\mathrm i(\theta_i -\theta_j)} = \frac{1}{N^2} \sum_{i, j=1}^n \cos(\theta_i -\theta_j),
\end{align*}
and it follows from Lemma \ref{L4.2} that $R$ is monotonically increasing. Hence, if $R(0) > 0$, order parameters are well defined for all $t\geq0$. Moreover, since $R\in(0, 1]$, there exists $R^\infty > 0$ such that
\begin{align*}
\lim_{t\to\infty}R(t) = R^\infty.
\end{align*}
Furthermore, we divide both sides of \eqref{D-6} by $e^{\mathrm i\theta_i}$ and take imaginary part to obtain
\begin{align*}
R\sin(\phi-\theta_i) = \frac{1}{N} \sum_{j=1}^N \sin(\theta_j -\theta_i).
\end{align*}
By Lemma \ref{L3.3} (ii), we have
\begin{align*}
\lim_{t\to\infty} \sin(\phi-\theta_i) = 0, \quad \forall~i=1,\cdots, N.
\end{align*}
This implies
\begin{align*}
& \lim_{t\to\infty} \sin(\theta_i-\theta_j) = \lim_{t\to\infty} \Big(  \sin(\theta_i-\phi)\cos(\theta_j-\phi)-\cos(\theta_i-\phi)\sin(\theta_j-\phi) \Big) = 0.
\end{align*}
We summarize those arguments in the following theorem.
\begin{theorem} \label{T4.2}
Suppose that initial data and coupling strength satisfy
\begin{align*}
R(0) = \bigg| \frac{1}{N} \sum_{i=1}^N e^{\mathrm i\theta_i^{in}} \bigg| > 0, \quad \kappa > 0,
\end{align*}
and let $\theta_i$ be a global solution of \eqref{D-4}. Then, we have
\begin{align*}
\lim_{t\to\infty} (\theta_i -\theta_j) = 0 \mod ~\pi, \quad \forall~i, j=1,\cdots, N.
\end{align*}
\end{theorem}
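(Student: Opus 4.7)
The plan is to reduce the statement to the limiting behavior of the order parameters via Lemma \ref{L4.2}, and then to read off $\sin(\theta_i-\theta_j)\to 0$ through an addition formula.

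First, I would exploit the identity
\[ R^2 = \frac{1}{N^2} \sum_{i,j=1}^{N} \cos(\theta_i -\theta_j), \]
which is derived in the Kuramoto preliminaries (see \eqref{R-order}) and which continues to hold because it only depends on the definition of $R$. The first conclusion of Lemma \ref{L4.2} then says $\frac{d}{dt} R^2 \geq 0$, so $R$ is nondecreasing. Since $R(0) > 0$ by assumption, we get $R(t) \geq R(0) > 0$ for all $t \geq 0$, and $R$ is bounded above by $1$, so $R$ admits a finite positive limit $R^{\infty} \in (R(0),1]$. Because $R$ is strictly positive on $[0,\infty)$, the phase $\phi(t)$ defined by $R e^{\mathrm i \phi} = \frac{1}{N}\sum_{j} e^{\mathrm i \theta_j}$ is well-defined (up to $2\pi$) smoothly on $[0,\infty)$.

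Next, I would divide the defining relation of $(R,\phi)$ by $e^{\mathrm i \theta_i}$ and take the imaginary part to obtain
\[ R \sin(\phi -\theta_i) = \frac{1}{N} \sum_{j=1}^{N} \sin(\theta_j -\theta_i), \quad \forall \, i=1,\dots,N. \]
The second conclusion of Lemma \ref{L4.2} gives that the right-hand side tends to $0$ as $t\to\infty$. Since $R(t)$ is bounded below by $R(0) > 0$, we conclude $\sin(\phi(t) - \theta_i(t)) \to 0$ for every $i$.

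Finally, I would apply the addition formula
\[ \sin(\theta_i -\theta_j) = \sin(\theta_i-\phi)\cos(\theta_j-\phi) - \cos(\theta_i-\phi)\sin(\theta_j-\phi), \]
and use boundedness of cosine together with the previous step to conclude $\lim_{t\to\infty}\sin(\theta_i(t) -\theta_j(t)) = 0$. This is exactly the statement $\theta_i - \theta_j \to 0 \bmod \pi$. The main technical obstacle is the one already dispatched above: keeping $R(t)$ uniformly away from zero so that $\phi$ is smoothly defined and so that $\sin(\phi-\theta_i)\to 0$ may be inferred from the convergence of the sinusoidal sum; the monotonicity provided by Lemma \ref{L4.2} is precisely what makes $R(0)>0$ a usable hypothesis, and the rest is algebraic manipulation.
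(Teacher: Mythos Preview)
Your proposal is correct and follows essentially the same route as the paper: use Lemma~\ref{L4.2} to show $R^2$ is nondecreasing (hence $R(t)\ge R(0)>0$ so $\phi$ is globally well-defined), rewrite the order-parameter identity as $R\sin(\phi-\theta_i)=\frac{1}{N}\sum_j\sin(\theta_j-\theta_i)$, apply the second conclusion of Lemma~\ref{L4.2} to send the right-hand side to zero, and finish with the sine addition formula. The only cosmetic difference is that you make the lower bound $R(t)\ge R(0)$ explicit when dividing by $R$, which the paper leaves implicit.
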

\begin{remark}
Note that Theorem \ref{T4.2} guarantees complete synchronization and it guarantees dichotomy: complete synchronization or bipolar state. 
\end{remark}

\subsection{A heterogeneous ensemble} \label{sec:4.2}
Consider the generalized Kuramoto model with distributed natural frequencies:
\begin{align} \label{D-7}
\begin{cases}
\displaystyle \dot\theta_i = G\bigg( \nu_i +\frac{\kappa}{N} \sum_{j=1}^N \sin(\theta_j -\theta_i) \bigg), \quad \forall~t>0, \\
\displaystyle \theta_i(0) = \theta_i^{in}, \quad \forall~i=1,\cdots,N.
\end{cases}
\end{align}

\begin{lemma} \label{L4.3}
Suppose that initial data, natural frequency, and coupling strength satisfy
\begin{align*}
& \kappa > \frac{{\mathcal D}(\Omega)}{{\mathcal D}(\Theta^{in})}>0, \quad {\mathcal D}(\Theta^{in}) < \pi-\theta_*, \quad \theta_* :=\sin^{-1} \bigg( \frac{{\mathcal D}(\Omega)}{\kappa} \bigg) \in\bigg( 0, ~\frac{\pi}{2} \bigg),
\end{align*}
and let $\Theta$ be the global smooth solution to \eqref{D-7}. Then, there exists $t_* \geq 0$ such that, for all $t\geq t_*$,
\begin{align*}
D(\Theta(t)) \leq \max\{ \theta_*, ~\min\{ D(\Theta^{in}), ~\pi-D(\Theta^{in}) \}\} \leq \frac{\pi}{2}.
\end{align*}
\end{lemma}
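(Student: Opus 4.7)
The plan is to derive a single differential inequality for the phase diameter $\mathcal{D}(\Theta(t))$ and then run a short case analysis in terms of $d_0 := \mathcal{D}(\Theta^{in})$. For each $t$, let $M = M(t)$ and $m = m(t)$ be indices attaining $\max_i \theta_i$ and $\min_i \theta_i$, respectively. Starting from \eqref{D-7}, I would write
\[
\frac{d}{dt}\mathcal{D}(\Theta) = G(\nu_M + I_M) - G(\nu_m + I_m), \qquad I_i := \frac{\kappa}{N}\sum_{j=1}^N \sin(\theta_j-\theta_i),
\]
apply the mean value theorem to $G$ to produce $\xi(t)$ between these two arguments (confined to the compact range $[\min_i\nu_i - \kappa,\; \max_i\nu_i + \kappa]$, on which $G' \ge G'_* > 0$ by continuity and positivity of $G'$), and invoke the elementary inequality $\sin x + \sin y \ge \sin(x+y)$ for $x,y \ge 0$, $x+y \le \pi$ termwise on the sum producing $I_M - I_m$. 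Combined with $\nu_M - \nu_m \le \mathcal{D}(\Omega)$, this yields
\[
\frac{d}{dt}\mathcal{D}(\Theta) \le G'(\xi)\bigl[\mathcal{D}(\Omega) - \kappa \sin \mathcal{D}(\Theta)\bigr], \qquad \mathcal{D}(\Theta)\in[0,\pi].
\]
Since $\sin\theta_* = \mathcal{D}(\Omega)/\kappa$, the right-hand side is nonpositive on $[\theta_*, \pi - \theta_*]$ and strictly negative on the open band $(\theta_*, \pi - \theta_*)$.

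Next, a continuous-induction argument identical in structure to the proof of Lemma \ref{L4.1} gives the invariance $\mathcal{D}(\Theta(t)) < \pi - \theta_*$ for all $t \ge 0$. I would then split into three cases. If $d_0 \le \theta_*$, a first-crossing contradiction (any hypothetical overshoot of $\theta_*$ from below forces a time interval on which $\mathcal{D}(\Theta)\in(\theta_*,\pi-\theta_*)$, where $\dot{\mathcal{D}}<0$, contradicting the overshoot) yields $\mathcal{D}(\Theta(t)) \le \theta_*$ with $t_* = 0$. If $\theta_* < d_0 \le \pi/2$, the same first-crossing argument applied at level $d_0$ gives $\mathcal{D}(\Theta(t)) \le d_0$ for all $t \ge 0$, again with $t_* = 0$. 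If $\pi/2 < d_0 < \pi - \theta_*$, the symmetry $\sin d_0 = \sin(\pi - d_0)$ implies $\sin\mathcal{D}(\Theta) \ge \sin d_0 > \sin\theta_*$ on $[\pi - d_0,\; d_0]$, so the differential inequality gives a uniform rate $\dot{\mathcal{D}} \le G'_*(\mathcal{D}(\Omega)-\kappa \sin d_0) < 0$, forcing $\mathcal{D}(\Theta)$ to reach $\pi - d_0$ at some finite $t_*$; the same first-crossing argument then traps $\mathcal{D}(\Theta(t)) \le \pi - d_0$ for all $t \ge t_*$. The three case bounds unify to the claimed $\max\{\theta_*,\min\{d_0,\pi-d_0\}\} \le \pi/2$.

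The main technical obstacle I anticipate is to justify the first-crossing trapping rigorously, since $\mathcal{D}(\Theta(t))$ need not be $\mathcal{C}^1$: the extremal indices can coincide or switch, creating kinks. I plan to handle this by interpreting $\dot{\mathcal{D}}$ as the upper Dini derivative, which still satisfies the derived differential inequality almost everywhere and obeys the usual scalar comparison principle. A secondary subtlety is that the bound $\pi - d_0$ in the last case (rather than the sharper $\theta_*$) is precisely what the uniform-rate estimate can deliver via the symmetry $\sin d_0 = \sin(\pi - d_0)$ on the interval $[\pi - d_0, d_0]$; improving the bound to $\theta_*$ would require an additional asymptotic argument that is deliberately postponed past this lemma.
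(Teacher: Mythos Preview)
Your proposal is correct and follows essentially the same route as the paper: derive the differential inequality $\dot{\mathcal D}(\Theta)\le G'(\xi)\bigl[\mathcal D(\Omega)-\kappa\sin\mathcal D(\Theta)\bigr]$ via the extremal-index trick and the inequality $\sin x+\sin y\ge\sin(x+y)$, then split into the three cases $d_0\le\theta_*$, $\theta_*<d_0\le\pi/2$, $\pi/2<d_0<\pi-\theta_*$, handling the first two by a first-crossing argument with $t_*=0$ and the third by a uniform negative rate on $[\pi-d_0,d_0]$. The only differences are cosmetic (you set up the inequality once and reuse it, whereas the paper rederives it inside each case; you invoke the mean value theorem for $G$ explicitly, whereas the paper writes $\min G'$ directly), and your explicit remark about Dini derivatives is a technical point the paper leaves implicit.
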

\begin{proof}
We use a similar argument in the proof of Lemma \ref{L4.1}. As in the proof of Lemma \ref{L4.1}, let $M$ and $m$ be time dependent indices such that 
\begin{align*}
\theta_M = \max_{1\leq i \leq N} \theta_i \quad \mbox{and} \quad \theta_m = \min_{1\leq i \leq N} \theta_i,
\end{align*}
and furthermore, we define two constants $\nu_*$ and $\nu^*$ satisfying
\begin{align*}
\nu_* := \min_{1\leq A\leq N} \nu_i \quad \mbox{and} \quad \nu^* := \max_{1\leq A\leq N} \nu_i.
\end{align*}

\noindent $\bullet$ Case A (${\mathcal D}(\Theta^{in}) \leq \theta_*$): Suppose that the following set is nonempty and we set its supremum:
\begin{align*}
{\mathcal T} := \{ t>0:~{\mathcal D}(\Theta(t)) > \theta_* \}, \quad t_* := \inf {\mathcal T}.
\end{align*}
Then, it follows from the definition of $t_*$ that 
\begin{align} \label{D-8}
{\mathcal D}(\Theta(t_*)) = \theta_* \quad \mbox{and} \quad  \frac{d}{dt} \bigg|_{t=t_*+} {\mathcal D}(\Theta) \geq 0.
\end{align}
On the other hand, since
\begin{align*}
& \nu_M +\frac{\kappa}{N} \sum_{j=1}^N \sin(\theta_j(t_*) -\theta_M(t_*)) -\bigg( \nu_m +\frac{\kappa}{N} \sum_{j=1}^N \sin(\theta_j(t_*) -\theta_m(t_*)) \bigg) \\
& \hspace{.3cm} \leq \cD(\Omega) -\frac{\kappa}{N}\sum_{j=1}^N \Big[ \sin(\theta_M(t_*)-\theta_j(t_*)) +\sin(\theta_j(t_*)-\theta_m(t_*)) \Big] \\
& \hspace{.3cm} < \cD(\Omega) -\kappa\sin(\theta_M(t_*) -\theta_m(t_*)) = \cD(\Omega) -\kappa\sin\theta_* = 0,
\end{align*}
one can conclude that at $t = t_*+$,
\begin{align*}
\begin{aligned}
\frac{d}{dt} {\mathcal D}(\Theta) &= G\bigg( \nu_M +\frac{\kappa}{N} \sum_{j=1}^N \sin(\theta_j -\theta_M) \bigg) -G\bigg( \nu_m +\frac{\kappa}{N} \sum_{j=1}^N \sin(\theta_j -\theta_m) \bigg) \\
& \leq \bigg( \min_{\nu_*-\kappa \leq \omega \leq \nu^*+\kappa}G'(\omega) \bigg) \bigg( {\mathcal D}(\Omega) -\frac{\kappa}{N}\sum_{j=1}^N \sin(\theta_M-\theta_j) +\sin(\theta_j-\theta_m) \bigg) < 0.
\end{aligned}
\end{align*}
This contradicts to \eqref{D-8}. Therefore, we can conclude that ${\mathcal T}$ is empty, which is our desired result with $t_* = 0$.

\vspace{.2cm}

\noindent $\bullet$ Case B ($\theta_* < {\mathcal D}(\Theta^{in})$):  It follows from the continuity of $\theta_i$ that there exists $T >0$ satisfying
\begin{align*}
\theta_* < {\mathcal D}(\Theta(t)) < \pi -\theta_*, \quad \forall ~t\in[0, T).
\end{align*}
Then, for all $t\in(0, T)$, we have
\begin{align} \label{D-9}
\begin{aligned}
& \nu_M +\frac{\kappa}{N} \sum_{j=1}^N \sin(\theta_j -\theta_M) -\bigg( \nu_m +\frac{\kappa}{N} \sum_{j=1}^N \sin(\theta_j -\theta_m) \bigg) \\
& \hspace{1cm} \leq \cD(\Omega) -\frac{\kappa}{N}\sum_{j=1}^N \sin(\theta_M -\theta_j) +\sin(\theta_j -\theta_m) \\
& \hspace{1cm} < \cD(\Omega) -\kappa\sin((\Theta)) < \cD(\Omega) -\kappa\sin\theta_* = 0.
\end{aligned}
\end{align}
This implies that for all $t\in(0, T)$,
\begin{align} \label{D-10}
\begin{aligned}
\frac{d}{dt} {\mathcal D}(\Theta) &= G\bigg( \nu_M +\frac{\kappa}{N} \sum_{j=1}^N \sin(\theta_j -\theta_M) \bigg) -G\bigg( \nu_m +\frac{\kappa}{N} \sum_{j=1}^N \sin(\theta_j -\theta_m) \bigg) \\
& \leq \bigg( \min_{\nu_*-\kappa \leq \omega \leq \nu^*+\kappa}G'(\omega) \bigg) \bigg(D(\Omega) -\frac{\kappa}{N}\sum_{j=1}^N \sin(\theta_M-\theta_j) +\sin(\theta_j-\theta_m) \bigg) < 0.
\end{aligned}
\end{align}
Since ${\mathcal D}(\Theta)$ monotonically decrease on $(0, T)$, we can conclude 
\begin{align*}
{\mathcal D}(\Theta(t)) \leq {\mathcal D}(\Theta^{in}) < \pi -\theta_*, \quad \forall ~t\geq0.
\end{align*}

\vspace{.2cm}

\noindent$\diamond$ Case B.1 (${\mathcal D}(\Theta^{in}) \leq \pi/2$): Since
\begin{align*}
\max\{ \theta_*, ~\min\{{\mathcal D}(\Theta^{in}), \pi- {\mathcal D}(\Theta^{in}) \}\} = {\mathcal D}(\Theta^{in}),
\end{align*}
we obtain our desired result with $t_* = 0$.

\vspace{.2cm}

\noindent$\diamond$ Case B.2 (${\mathcal D}(\Theta^{in}) > \pi/2$):~Suppose that
\[  {\mathcal D}(\Theta(t)) > \pi-{\mathcal D}(\Theta^{in}) \quad \mbox{for all $t>0$}. \]
Then, \eqref{D-9} and \eqref{D-10} imply, for all $t>0$,
\begin{align*}
\frac{d}{dt} {\mathcal D}(\Theta) &< \bigg( \min_{\nu_*-\kappa \leq \omega \leq \nu^*+\kappa}G'(\omega) \bigg) ({\mathcal D}(\Omega) -\kappa\sin({\mathcal D}(\Theta^{in})) \\
&<  \bigg( \min_{\nu_*-\kappa \leq \omega \leq \nu^*+\kappa}G'(\omega) \bigg) ({\mathcal D}(\Omega) -\kappa\sin\theta_*) = 0.
\end{align*}
This yields, for all $t>0$,
\begin{align*}
& {\mathcal D}(\Theta(t)) -{\mathcal D}(\Theta^{in}) < t\bigg( \min_{\nu_*-\kappa \leq \omega \leq \nu^*+\kappa}G'(\omega) \bigg) ({\mathcal D}(\Omega) -\kappa\sin({\mathcal D}(\Theta^{in})) < 0.
\end{align*}
Letting $t \to \infty$, we observe ${\mathcal D}(\Theta)$ diverges to $-\infty$, which gives a contradiction. So, we obtain the desired result with
\begin{align*}
t_* \geq \frac{2 {\mathcal D}(\Theta^{in}) -\pi}{\displaystyle \bigg( \min_{\nu_*-\kappa \leq \omega \leq \nu^*+\kappa}G'(\omega) \bigg) (\kappa\sin({\mathcal D}(\Theta^{in}) -{\mathcal D}(\Omega))}.
\end{align*}
\end{proof}
Now, we combine Proposition \ref{P2.1} and Lemma \ref{L3.2} to present the result on the emergence of phase locked state.
\begin{theorem} \label{T4.3}
\emph{(Complete synchronization)}
Suppose that initial data, natural frequency, and coupling strength satisfy
\begin{align*}
& \kappa > \frac{{\mathcal D}(\Omega)}{{\mathcal D}(\Theta^{in})}>0, \quad{\mathcal  D}(\Theta^{in}) < \pi-\theta_*, \quad \theta_* :=\sin^{-1} \bigg( \frac{{\mathcal D}(\Omega)}{\kappa} \bigg) \in\bigg( 0, \frac{\pi}{2} \bigg),
\end{align*}
and let $\theta_i$ be a global smooth solution to \eqref{C-1}. Then, asymptotic complete-frequency synchronization occurs asymptotically:
\begin{align*}
\lim_{t\to\infty} \big| \dot\theta_i(t) -\dot\theta_j(t) \big| = 0.
\end{align*}
\end{theorem}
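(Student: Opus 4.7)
The plan is to exploit the diameter contraction furnished by Lemma \ref{L4.3} to enter a regime in which every pairwise cosine $\cos(\theta_j-\theta_i)$ is uniformly bounded away from zero, and then combine the energy identity of Proposition \ref{P3.1} with Barbalat's lemma (Lemma \ref{L3.2}) to extract asymptotic frequency alignment.

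Lemma \ref{L4.3} provides $t_*\ge 0$ after which $\cD(\Theta(t))\le \max\{\theta_*,\min\{\cD(\Theta^{in}),\pi-\cD(\Theta^{in})\}\}\le \pi/2$. Because $\theta_*<\pi/2$ strictly and the computation in the proof of that lemma produces a strictly negative upper bound for $\tfrac{d}{dt}\cD(\Theta)$ whenever $\cD(\Theta)\in(\theta_*,\pi/2]$, a short continuation argument yields $t_{**}\ge t_*$ and a constant $D^{**}<\pi/2$ such that $\cD(\Theta(t))\le D^{**}$ for all $t\ge t_{**}$. In particular $\cos(\theta_j(t)-\theta_i(t))\ge c_*:=\cos D^{**}>0$ for all such $t$ and all indices $i,j$.

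Applying Proposition \ref{P3.1} on $[t_{**},\infty)$ then gives
\[
\frac{d\mathcal E_F}{dt}=-\frac{\kappa}{2N}\sum_{i,j=1}^N\cos(\theta_j-\theta_i)(\dot\theta_i-\dot\theta_j)^2\;\le\;-\frac{\kappa c_*}{2N}f(t),\qquad f(t):=\sum_{i,j=1}^N(\dot\theta_i-\dot\theta_j)^2.
\]
Since $\mathcal E_F\ge 0$, integration yields $\int_{t_{**}}^{\infty}f(t)\,dt<\infty$. To close via Barbalat I verify uniform continuity of $f$: the pointwise bound $|F(\dot\theta_i)|\le \max_k|\nu_k|+\kappa$ confines each $\dot\theta_i$ to a compact subinterval of $(-L,L)$ on which $F'$ is bounded below by a positive constant, so differentiating $F(\dot\theta_i)=\nu_i+(\kappa/N)\sum_j\sin(\theta_j-\theta_i)$ in time produces a uniform bound on $\ddot\theta_i$, hence on $f'$, and $f$ is Lipschitz. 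Lemma \ref{L3.2}(1) then forces $f(t)\to 0$, so $\lim_{t\to\infty}|\dot\theta_i(t)-\dot\theta_j(t)|=0$ for every pair $i,j$. The only delicate point is the edge case $\cD(\Theta^{in})=\pi/2$, in which the nominal bound of Lemma \ref{L4.3} is exactly $\pi/2$ and the dissipation constant $c_*$ threatens to degenerate; the strict sign of the derivative estimate on $(\theta_*,\pi/2]$ inside the proof of Lemma \ref{L4.3} is precisely what lets us upgrade the bound to $D^{**}<\pi/2$ after additional waiting, which is the main technical obstacle of the argument.
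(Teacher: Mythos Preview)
Your argument is correct and follows essentially the same route as the paper: invoke Lemma \ref{L4.3} to trap the phase diameter strictly below $\pi/2$, feed the resulting uniform positivity of $\cos(\theta_j-\theta_i)$ into the energy identity of Proposition \ref{P3.1} to obtain integrability of $\sum_{i,j}(\dot\theta_i-\dot\theta_j)^2$, and then close with Barbalat's lemma after bounding $\ddot\theta_i$. Your treatment of the edge case $\cD(\Theta^{in})=\pi/2$ via a further waiting time $t_{**}$ is equivalent to the paper's device of noting that $\tfrac{d}{dt}\big|_{t=0+}\cD(\Theta)<0$ and reducing without loss of generality to $\cD(\Theta^{in})\neq\pi/2$.
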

\begin{proof}
For $t_* \geq 0$, we use Lemma \ref{L4.3} to see,
\begin{align} \label{C-9}
\begin{aligned}
{\mathcal D}(\Theta(t)) &\leq \max\{ \theta_*, ~\min\{{\mathcal D}(\Theta^{in}), \pi-{\mathcal D}(\Theta^{in}) \}\} =: \theta^* < \frac{\pi}{2}, \quad \forall~t\geq t_*,
\end{aligned}
\end{align}
when ${\mathcal D}(\Theta^{in}) \ne \pi/2$. However, when ${\mathcal D}(\Theta^{in}) = \pi/2$, \eqref{C-7} and \eqref{C-8} imply
\begin{align*}
& \frac{d}{dt} \bigg|_{t=0+} {\mathcal D}(\Theta) \leq \bigg( \min_{\nu_*-\kappa \leq \omega \leq \nu^*+\kappa}G'(\omega) \bigg) ({\mathcal D}(\Omega) -\kappa\sin({\mathcal D}(\Theta^{in}))) < 0.
\end{align*}
Without loss of generality, one can assume that ${\mathcal D}(\Theta^{in})\ne\pi/2$. Then, we use \eqref{C-9} and Proposition \ref{P2.1} to obtain
\begin{align*}
& \frac{\kappa}{2N}\sum_{i, j=1}^n \int_{t_*}^t \cos(\theta_j(s)-\theta_i(s)) \big( \dot\theta_i(s)-\dot\theta_j(s) \big)^2 ds \\
&\hspace{.2cm} \leq {\mathcal E}_F(t) +\frac{\kappa}{2n}\sum_{i, j=1}^N \int_{t_*}^t \cos(\theta_j(s)-\theta_i(s)) \big( \dot\theta_i(s)-\dot\theta_j(s) \big)^2 ds = {\mathcal E}_F(t_*)
\end{align*}
This implies 
\begin{align*}
& \cos\theta^*\int_{t_*}^\infty \big( \dot\theta_i(s)-\dot\theta_j(s) \big)^2 ds \leq \int_{t_*}^\infty \cos(\theta_j(s)-\theta_i(s)) \big( \dot\theta_i(s)-\dot\theta_j(s) \big)^2 ds < \infty.
\end{align*}
Now we claim:
\[  \mbox{the map $~t\mapsto\big( \dot\theta_i(t)-\dot\theta_j(t) \big)^2~$ is uniformly continuous.}  \]
Once this claim is verified, we can apply Lemma \ref{L3.2} to obtain the desired result.

\vspace{.2cm}

\noindent {\it Proof of claim}:~It follows from \eqref{C-10} that
\begin{align*}
\big| \dot\theta_i \big| \leq \max\Big\{ \Big| G\Big( \min_{1\leq i \leq N}\nu_i -\kappa \Big) \Big|, ~\Big| G\Big( \max_{1\leq i\leq N}\nu_i +\kappa \Big) \Big| \Big\}, \quad \forall ~t\geq0, \quad \forall ~i=1,\cdots, N.
\end{align*}
We also differentiate \eqref{C-10} and take absolute value to obtain
\begin{align*}
\big| \ddot\theta_i \big| \leq 2\kappa G'(0) \Big( \max_{1\leq i \leq N} \big| \dot\theta_i \big| \Big), \quad \forall~t\geq0, \quad \forall ~i=1,\cdots, N.
\end{align*}
This uniform boundedness implies
\begin{align*}
 \big( \dot\theta_i(t)-\dot\theta_j(t) \big)^2 = 2 \big( \dot\theta_i(t)-\dot\theta_j(t) \big) \big( \ddot\theta_i(t)-\ddot\theta_j(t) \big)
\end{align*}
is also uniformly bounded, from which we can conclude our claim.
\end{proof}
%
%
%

\section{non-relativistic limit of the RK model } \label{sec:5}
\setcounter{equation}{0}
In this section, we study a non-relativistic limit from the relativistic Kuramoto model to the non-relativistic Kuramoto model in any finite-time interval, as $c \to \infty$.  More precisely, we consider the relativistic Kuramoto model: for all $t>0$ and $i=1,\cdots, N$,
\begin{equation}  \label{E-1}
\dot\theta_i \Gamma_i \bigg(1+\frac{\Gamma_i}{c^2}\bigg) = \nu_i +\frac{\kappa}{N} \sum_{j=1}^N \sin(\theta_j -\theta_i),
\end{equation}
and the non-relativistic Kuramoto model:
\begin{equation} \label{E-2}
\dot\theta_i  = \nu_i +\frac{\kappa}{N} \sum_{j=1}^N \sin(\theta_j -\theta_i), \quad \forall~t > 0, \quad \forall~i=1,\cdots,N,
\end{equation}
with the same initial data:
\begin{equation} \label{E-3}
\theta_i(0) = \theta^{in}_i, \quad \forall~i=1,\cdots,N.
\end{equation}
\subsection{A non-relativistic limit} \label{sec:5.1}
In this subsection, we present a non-relativistic limit from \eqref{E-1} to \eqref{E-2} in any finite time interval as $c \to \infty$. We set 
\[  \Theta^c := (\theta_1^c, \cdots, \theta_N^c) \quad \mbox{and} \quad \Theta^{\infty} := (\theta_1^{\infty}, \cdots, \theta_N^{\infty}). \]
Let $\Theta^c$ and $\Theta^\infty$ be the smooth solutions to \eqref{E-1} and \eqref{E-2} with the same initial data \eqref{E-3}, respectively and recall that 
 \begin{equation} \label{E-3-0}
F(x)=\frac{cx}{\sqrt{c^2-x^2}}+\frac{x}{c^2-x^2} \quad \mbox{and} \quad G=F^{-1}. 
\end{equation}
 Then, $\Theta^c$ and $\Theta^{\infty}$ satisfy \eqref{E-1} and \eqref{E-2}:
\begin{equation} \label{E-3-1}
\begin{cases}
\displaystyle \dot\theta^c_i  = G\bigg( \nu_i +\frac{\kappa}{N} \sum_{j=1}^N \sin(\theta^c_j -\theta^c_i) \bigg), \vspace{.1cm} \\
\displaystyle \dot\theta^\infty_i  = \nu_i +\frac{\kappa}{N} \sum_{j=1}^N \sin(\theta^\infty_j -\theta^\infty_i).
\end{cases}
\end{equation}
	\begin{lemma}\label{L5.1}
		Let $\Theta^c$ and $\Theta^\infty$ be smooth solutions to \eqref{E-3-1} with the same initial data \eqref{E-3}. Then, there exists a positive constant $G^{\infty}$ independent of $c$ such that for all $t \geq 0$, 
\begin{enumerate}[(i)]
\item $\displaystyle \Big |G \Big(\nu_i +\frac{\kappa}{N} \sum_{j=1}^N \sin(\theta^c_j -\theta^c_i) \Big)  \Big|\leq G^\infty$,

\item $\displaystyle \Big |G \Big(\nu_i +\frac{\kappa}{N} \sum_{j=1}^N \sin(\theta^c_j -\theta^c_i) \Big) -\Big(\nu_i +\frac{\kappa}{N} \sum_{j=1}^N \sin(\theta^c_j -\theta^c_i) \Big ) \Big |\leq\mathcal{O}(c^{-2})$,
\end{enumerate}
	\end{lemma}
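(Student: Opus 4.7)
My plan is to prove both parts via a pointwise analysis of the scalar map $y = F(x)$ with $x := G(y)$, exploiting the explicit formula $F(x) = \frac{cx}{\sqrt{c^2-x^2}} + \frac{x}{c^2-x^2}$ given in \eqref{E-3-0}. No dynamics or Gronwall argument enters at this stage; the whole lemma reduces to elementary scalar inequalities on $F$.

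For part (i), the argument inside $G$ is bounded in absolute value by $A := \max_i |\nu_i| + \kappa$ (uniformly in $c$), since $|\sin| \leq 1$. It therefore suffices to establish the pointwise comparison $|G(y)| \leq |y|$. By oddness and monotonicity of $G$ (equivalently of $F$), this reduces to $F(x) \geq x$ for $x \in [0, c)$, which is immediate from the explicit form: the first summand alone satisfies $\frac{cx}{\sqrt{c^2-x^2}} \geq x$ because $c \geq \sqrt{c^2 - x^2}$, while the second summand is nonnegative. Hence one may take $G^{\infty} := \max_i |\nu_i| + \kappa$, which is manifestly $c$-independent.

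For part (ii), set $y := \nu_i + \frac{\kappa}{N}\sum_j \sin(\theta_j^c - \theta_i^c)$ and $x := G(y)$. Then $|x| \leq |y| \leq G^{\infty}$ by part (i), and $G(y) - y = x - F(x)$. A short algebraic rearrangement, based on the identity $c - \sqrt{c^2 - x^2} = x^2 / (c + \sqrt{c^2 - x^2})$, gives
\[
F(x) - x \;=\; \frac{x^3}{\sqrt{c^2-x^2}\,\bigl(c + \sqrt{c^2-x^2}\bigr)} \;+\; \frac{x}{c^2 - x^2}.
\]
Combining this with the uniform bound $|x| \leq G^{\infty}$, and taking $c$ large enough that $\sqrt{c^2 - x^2} \geq c/2$, produces an estimate of the form $|F(x) - x| \leq C(G^{\infty})/c^2$, which is the desired $\mathcal{O}(c^{-2})$ bound.

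The only subtlety to be careful about is ensuring that every constant that appears is genuinely independent of $c$; this is precisely why part (i) must be established first, since it supplies the $c$-free bound on $|x|$ without which the estimate in (ii) could not be made uniform. Everything else is routine scalar manipulation of $F$, and the real use of these two estimates will come later, when Lemma \ref{L5.1} is plugged into a Gronwall-type comparison between $\Theta^c$ and $\Theta^{\infty}$ in the proof of the non-relativistic limit (Theorem \ref{T5.1}).
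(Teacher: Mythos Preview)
Your proof is correct and follows essentially the same approach as the paper: both arguments reduce to the scalar comparison $F(x)\geq x$ on $[0,c)$ (equivalently $|G(y)|\leq |y|$) for part (i), and both handle part (ii) by writing $G(y)-y = x - F(x)$ with $|x|\leq G^\infty$ and rationalizing $c-\sqrt{c^2-x^2}$ to extract the $\mathcal{O}(c^{-2})$ factor. The only cosmetic difference is that you argue directly via $F$ while the paper phrases the same inequality in terms of $G$, but the underlying computation and the choice $G^\infty = \kappa + \max_i |\nu_i|$ are identical.
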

	\begin{proof}
	(i)~We set
	\[  x:= \nu_i +\frac{\kappa}{N} \sum_{j=1}^N \sin(\theta^c_j -\theta^c_i). \] 
	It follows from \eqref{E-3-0} that 
	\[\frac{cG(|x|)}{\sqrt{c^2-G(|x|)^2}}+\frac{G(|x|)}{c^2-G(|x|)^2}=|x|.\] 
	This and the monotonicity of $G$ imply
	\begin{align*}
	& \frac{cG(|x|)}{\sqrt{c^2-G(x)^2}}\leq|x|, \quad |G(x)|=G(|x|)\leq \frac{c|x|}{\sqrt{c^2+|x|^2}}\leq |x|.
	\end{align*}
	This yields
	\begin{align*}
	& \bigg|G\Big(\nu_i +\frac{\kappa}{N} \sum_{j=1}^N \sin(\theta^c_j -\theta^c_i) \Big) \bigg| \leq \bigg|\nu_i +\frac{\kappa}{N} \sum_{j=1}^N \sin(\theta^c_j -\theta^c_i)\bigg| \leq \kappa+\max_{1\leq i\leq N}|\nu_i| =: G^{\infty}.
	\end{align*}
	Note that the constant $G^\infty$  is independent on $c$. \newline
	
	\noindent (ii)~Note that 
	\begin{align*}
	& \frac{cG(x)}{\sqrt{c^2-G(x)^2}}-G(x)+\frac{G(x)}{c^2-G(x)^2}=x-G(x) \iff \frac{cG(x)}{\sqrt{c^2-G(x)^2}}+\frac{G(x)}{c^2-G(x)^2}=x.
	\end{align*}
	This implies
	\begin{align*}
	|G(x)-x| &\leq \bigg|\frac{cG(x)}{\sqrt{c^2-G(x)^2}}-G(x)\bigg|+\bigg|\frac{G(x)}{c^2-G(x)^2}\bigg| \\
	& \leq G^\infty\bigg|\frac{c}{\sqrt{c^2-(G^\infty)^2}}-1\bigg|+\bigg|\frac{G^\infty}{c^2-(G^\infty)^2}\bigg| =\mathcal{O}(c^{-2}),
	\end{align*}
	where we used the relation:
\begin{align*}
& \bigg|\frac{c}{\sqrt{c^2-(G^\infty)^2}}-1\bigg| =\frac{(G^\infty)^2}{(c+\sqrt{c^2-(G^\infty)^2})(\sqrt{c^2-(G^\infty)^2})}.
\end{align*}
	Hence, we have the desired estimate.
	\end{proof}
	\begin{theorem} \label{T5.1}
For $T \in (0, \infty)$, let $\Theta^c$ and $\Theta^\infty$ be two solutions to \eqref{E-3-1} with the same initial data \eqref{E-3}. Then, one has 
\[ \lim_{c\to\infty}\sup_{0\leq t\leq T} \|\Theta^c(t) - \Theta^{\infty}(t) \|_1 = 0. \]	
\end{theorem}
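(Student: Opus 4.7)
The plan is to derive a Grönwall-type differential inequality for $\|\Theta^c(t) - \Theta^\infty(t)\|_1$ and exploit the $\mathcal{O}(c^{-2})$ estimate already established in Lemma \ref{L5.1}(ii). Since $\Theta^c$ and $\Theta^\infty$ share the same initial data, the $L^1$ distance starts at zero, so the Grönwall step will automatically force the quantity to vanish uniformly on $[0,T]$ as $c \to \infty$.

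Concretely, I would subtract the two equations in \eqref{E-3-1} and insert the pivot term $\nu_i + \frac{\kappa}{N}\sum_j \sin(\theta^c_j-\theta^c_i)$ to get the decomposition
\[
\dot\theta^c_i - \dot\theta^\infty_i
= \underbrace{\Bigl[G\Bigl(\nu_i + \tfrac{\kappa}{N}\sum_j \sin(\theta^c_j-\theta^c_i)\Bigr) - \Bigl(\nu_i + \tfrac{\kappa}{N}\sum_j \sin(\theta^c_j-\theta^c_i)\Bigr)\Bigr]}_{(\mathrm{I})_i}
+ \underbrace{\frac{\kappa}{N}\sum_j\bigl[\sin(\theta^c_j-\theta^c_i)-\sin(\theta^\infty_j-\theta^\infty_i)\bigr]}_{(\mathrm{II})_i}.
\]
Lemma \ref{L5.1}(ii) immediately gives $|(\mathrm{I})_i| \le C c^{-2}$ for a constant $C$ depending only on $\kappa$ and $\max_i|\nu_i|$, uniformly in $t$ and $i$. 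For $(\mathrm{II})_i$, the $1$-Lipschitz property of $\sin$ yields $|(\mathrm{II})_i| \le \frac{\kappa}{N}\sum_j(|\theta^c_i-\theta^\infty_i|+|\theta^c_j-\theta^\infty_j|)$.

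Summing over $i$ and using the triangle inequality $\frac{d}{dt}\|\Theta^c-\Theta^\infty\|_1 \le \sum_i|\dot\theta^c_i-\dot\theta^\infty_i|$, I obtain
\[
\frac{d}{dt}\|\Theta^c(t)-\Theta^\infty(t)\|_1 \;\le\; NC c^{-2} + 2\kappa\,\|\Theta^c(t)-\Theta^\infty(t)\|_1,
\]
which, together with the zero initial condition, gives by Grönwall's lemma
\[
\|\Theta^c(t)-\Theta^\infty(t)\|_1 \;\le\; \frac{NC}{2\kappa c^{2}}\bigl(e^{2\kappa t}-1\bigr), \qquad \forall\, t\in[0,T].
\]
Taking the supremum over $[0,T]$ and letting $c\to\infty$ completes the proof.

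I expect no serious obstacle beyond being careful that the constant $G^\infty$ from Lemma \ref{L5.1} is genuinely independent of $c$ and $t$ (which was already verified in the preceding lemma, since it is bounded by $\kappa+\max_i|\nu_i|$). The only mildly subtle point is recognizing that the inhomogeneous forcing $NCc^{-2}$ is a constant in $t$, so that the Grönwall bound grows only like $e^{2\kappa T}$ on a fixed interval and still vanishes in the non-relativistic limit; this is why convergence is local in time rather than uniform on $[0,\infty)$.
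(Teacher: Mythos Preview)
Your proof is correct. Both you and the paper split $\dot\theta_i^c - \dot\theta_i^\infty$ into the same two pieces $(\mathrm{I})_i$ and $(\mathrm{II})_i$ and invoke Lemma~\ref{L5.1}(ii) for the first; the difference lies entirely in how the coupling term $(\mathrm{II})_i$ is handled. You bound it via the $1$-Lipschitz property of sine and feed the result into Gr\"onwall, arriving at $\|\Theta^c-\Theta^\infty\|_1 \le \tfrac{NC}{2\kappa c^2}(e^{2\kappa T}-1)$. The paper instead applies the mean value theorem to $\sin(\theta_j^c-\theta_i^c)-\sin(\theta_j^\infty-\theta_i^\infty)$, multiplies by $\mathrm{sgn}(\theta_i^c-\theta_i^\infty)$, and symmetrizes in $i,j$; arguing that the resulting interaction sum has a sign, it obtains the sharper differential inequality $\tfrac{d}{dt}\|\Theta^c-\Theta^\infty\|_1 \le \mathcal{O}(c^{-2})$ with no Gr\"onwall factor, hence a bound that grows only linearly in $T$. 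Your route is more elementary and does not rely on any sign control of the cosine factor appearing in the symmetrization, while the paper's argument yields the quantitatively stronger $\mathcal{O}(c^{-2})T$ estimate that is later compared with the numerical experiments in Section~\ref{sec:5.2}. For the statement of Theorem~\ref{T5.1} as written, either approach suffices.
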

\begin{proof} We split its proof into two steps.

\vspace{.2cm}

\noindent $\bullet$ Step A: We derive the following relation:
\begin{equation} \label{E-3-1-1}
 \frac{d}{dt} \| \Theta^c(t) - \Theta^{\infty}(t) \|_1 \leq \mathcal{O}(c^{-2}), \quad \forall~t > 0.
\end{equation}
It follows from \eqref{E-3-1} that 
\begin{align}
\begin{aligned} \label{E-3-2}
& \frac{1}{2}\frac{d}{dt}|\theta_i^c-\theta_i^\infty|^2 =|\theta_i^c-\theta_i^\infty|\frac{d}{dt}|\theta_i^c-\theta_i^\infty|=(\theta_i^c-\theta_i^\infty)(\dot{\theta}_i^c-\dot{\theta}_i^\infty)\\
&\hspace{.2cm} =(\theta_i^c-\theta_i^\infty)\bigg[ G\Big(\nu_i +\frac{\kappa}{N} \sum_{j=1}^N \sin(\theta^c_j -\theta^c_i)\Big) -\Big(\nu_i +\frac{\kappa}{N} \sum_{j=1}^N \sin(\theta^\infty_j -\theta^\infty_i)\Big)\bigg] \\
&\hspace{.2cm} =(\theta_i^c-\theta_i^\infty)\bigg[ G\Big(\nu_i +\frac{\kappa}{N} \sum_{j=1}^N \sin(\theta^c_j -\theta^c_i)\Big) -\Big(\nu_i +\frac{\kappa}{N} \sum_{j=1}^N \sin(\theta^c_j -\theta^c_i)\Big) \\
&\hspace{3cm} +\Big(\nu_i +\frac{\kappa}{N} \sum_{j=1}^N \sin(\theta^c_j -\theta^c_i)\Big) -\Big(\nu_i +\frac{\kappa}{N} \sum_{j=1}^N \sin(\theta^\infty_j -\theta^\infty_i)\Big)\bigg].
\end{aligned}
\end{align}
Now, we use \eqref{E-3-2}, Lemma \ref{L5.1} and mean-value theorem to find
\begin{align*}
&|\theta_i^c-\theta_i^\infty|\frac{d}{dt}|\theta_i^c-\theta_i^\infty|\\
&\hspace{0.2cm} \leq|\theta_i^c-\theta_i^\infty| \cdot \bigg|G\bigg(\nu_i +\frac{\kappa}{N} \sum_{j=1}^N \sin(\theta^c_j -\theta^c_i)\bigg) -\bigg(\nu_i +\frac{\kappa}{N} \sum_{j=1}^N \sin(\theta^c_j -\theta^c_i)\bigg)\bigg|\\
&\hspace{0.5cm}+(\theta_i^c-\theta_i^\infty)\bigg(\bigg(\nu_i +\frac{\kappa}{N} \sum_{j=1}^N \sin(\theta^c_j -\theta^c_i)\bigg) -\bigg(\nu_i +\frac{\kappa}{N} \sum_{j=1}^N \sin(\theta^\infty_j -\theta^\infty_i)\bigg)\bigg)\\
&\hspace{0.2cm} \leq \mathcal{O}(c^{-2})|\theta_i^c-\theta_i^\infty| +(\theta_i^c-\theta_i^\infty)\bigg(\bigg(\frac{\kappa}{N} \sum_{j=1}^N \sin(\theta^c_j -\theta^c_i)\bigg) -\bigg(\frac{\kappa}{N} \sum_{j=1}^N \sin(\theta^\infty_j -\theta^\infty_i)\bigg)\bigg) \\
&\hspace{0.2cm} =\mathcal{O}(c^{-2})|\theta_i^c-\theta_i^\infty| +\frac{\kappa}{N}(\theta_i^c-\theta_i^\infty)\sum_{j=1}^N\cos(t^{*}_{ij})\left(\theta^c_j -\theta^c_i-\theta^\infty_j +\theta^\infty_i\right),
\end{align*}
where we used $t^{*}_{ij}=t^{*}_{ji}$, the fact that cosine is even, and  the relation:
\[ \min(\theta^c_j -\theta^c_i,\theta^\infty_j -\theta^\infty_i)< t^{*}_{ij}< \max(\theta^c_j -\theta^c_i,\theta^\infty_j -\theta^\infty_i). \]
Then, one has
\begin{align} \label{E-5}
\begin{aligned}
& \frac{d}{dt}|\theta_i^c-\theta_i^\infty| \leq\mathcal{O}(c^{-2})+\frac{\kappa}{N}\cdot \text{sgn}(\theta_i^c-\theta_i^\infty) \sum_{j=1}^N\cos(t^{*}_{ij})\left(\theta^c_j -\theta^c_i-\theta^\infty_j +\theta^\infty_i\right).
\end{aligned}
\end{align}
We take a sum \eqref{E-5} over $i = 1, \cdots, N$ to find the desired estimate:

\begin{align*}
\begin{aligned} 
&\frac{d}{dt} \| \Theta^c(t) - \Theta^{\infty}(t) \|_1 \\
& \hspace{0.2cm} \leq\mathcal{O}(c^{-2}) +\frac{\kappa}{N}\sum_{i,j=1}^N\cos(t^{*}_{ij}) \text{sgn}(\theta_i^c-\theta_i^\infty)\left(\theta^c_j -\theta^c_i-\theta^\infty_j +\theta^\infty_i\right)\\
& \hspace{0.2cm} =\mathcal{O}(c^{-2}) +\frac{\kappa}{2N}\sum_{i,j=1}^N \Big[ \cos(t^{*}_{ij})\left(\text{sgn}(\theta_i^c-\theta_i^\infty)-\text{sgn}(\theta_j^c-\theta_j^\infty)\right)\left(\theta^c_j -\theta^\infty_j-(\theta^c_i -\theta^\infty_i)\right) \Big] \\
& \hspace{0.2cm} \leq \mathcal{O}(c^{-2}),
\end{aligned}
\end{align*}
where we used the standard index interchanging argument and the fact that
\[\left(\text{sgn}(\theta_i^c-\theta_i^\infty)-\text{sgn}(\theta_j^c-\theta_j^\infty)\right)\left(\theta^c_j -\theta^\infty_j-(\theta^c_i -\theta^\infty_i)\right)
\]
is non-positive. 

\vspace{.2cm}

\noindent $\bullet$ Step B: We integrate \eqref{E-3-1-1} over $[0, T]$ using $\|\Theta^{c,in} - \Theta^{\infty, in} \|_1 = 0$ to find
\[  \sup_{0 \leq t \leq T}  \| \Theta^c(t) - \Theta^{\infty}(t) \|_1 \leq  \mathcal{O}(c^{-2}) t \leq  \mathcal{O}(c^{-2}) T.      \]
Letting $c \to \infty$, we obtain the desired estimate.
\end{proof}

\subsection{Numerical simulations} \label{sec:5.2}
In this  subsection, we study the nonlinear response on the emergent dynamics changing the size of $c$ and a non-relativistic limit $(c \to \infty)$ from the relativistic Kurmoto model to the non-relativistic one. \newline

Consider following four Kuramoto type systems:
\begin{equation}
\begin{cases} \label{F-1}
\displaystyle \mbox{Kuramoto model (KM)}: ~{\dot \theta}_i = \nu_i + \frac{\kappa}{N} \sum_{j= 1}^{N} \sin(\theta_j - \theta_i), \\
\displaystyle \mbox{RK model (RKM)}: ~\dot\theta_i \Gamma_i \bigg(1+\frac{\Gamma_i}{c^2}\bigg) = \nu_i +\frac{\kappa}{N} \sum_{j=1}^N \sin(\theta_j -\theta_i), \\
\displaystyle \mbox{Approximate RK model I (ARKM-1)}: ~\frac{ \dot\theta_i }{\sqrt{1 - \frac{|{\dot \theta}_i|^2}{c^2}}}  = \nu_i +\frac{\kappa}{N} \sum_{j=1}^N \sin(\theta_j -\theta_i), \\
\displaystyle \mbox{Approximate RK model II (ARKM-2)}: ~c\tanh^{-1} \bigg( \frac{\dot\theta_i}{c} \bigg) = \nu_i +\frac{\kappa}{N} \sum_{j=1}^N \sin(\theta_j -\theta_i)
\end{cases}
\end{equation}
subject to the same  initial data:
\begin{equation} \label{F-2}
\theta_i(0) = \theta^{in}_i, \quad \forall ~i = 1, \cdots, N.
\end{equation}
For the simplicity of notation, we denote $ \Theta^0, \Theta^{1} ,\Theta^2$ and $\Theta^3$ as solutions to the Kuramoto model, the relativistic Kuramoto model, the approximate Kuramoto model I, and the approximate Kuramoto model II with the same initial data \eqref{F-2}, respectively. All the simulations in what follows, we used the fourth-order Runge-Kutta method. 

\subsubsection{Formation of phase-locked state} \label{sec:5.2.1} In this part, we compare phase-locked states for four systems in \eqref{F-1} issued from the same initial data. For numerical simulations, we choose system parameters as follows:
\begin{equation*}
\Delta t = 0.01,\quad N=10, \quad \kappa=1,
\end{equation*}
and we choose initial data  and natural frequencies randomly from the following intervals, respectively:
\begin{align*}
& \theta^{in}_i \in [-(\pi-\theta_*)/2, ~(\pi-\theta_*)/2], \quad \theta_*:=\sin^{-1} \Big( \frac{{\mathcal D}(\Omega)}{\kappa} \Big), \quad \nu_i \in [-0.15, 0.15], \quad \forall ~i=1,\cdots,N.
\end{align*}

\vspace{-1cm}

\begin{figure}[!h]
\centering
	\includegraphics[scale=0.45]{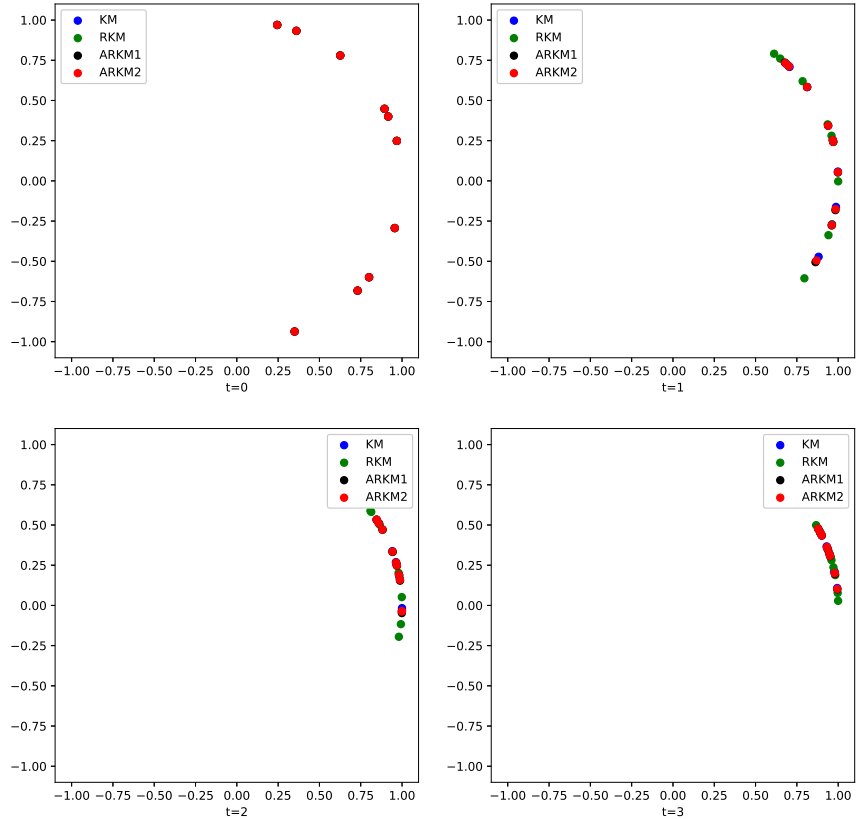}\hfill

	\caption{$\Theta^0, \Theta^1,\Theta^2,\Theta^3$ for $ t=0,1,2,3 $.}
	\label{Fig1}
\end{figure}

In Figure 1, we compare time-evolution of four trajectories corresponding to four different models with the same initial configuration. We plot smooth solutions $ \Theta^0, \Theta^1, \Theta^2, \Theta^3 $ at time $ t=0,1,2,3 $. Note that each flow tends to different phase-locked states exponentially fast, and each model shows different decay rate. The solution of relativistic Kuramoto model $ \Theta^1 $ decay rate is relatively slow compared to that of other models. 

\begin{figure}[!h]
	\begin{subfigure}{0.4\textwidth}
		\centering
		\includegraphics[scale=.5]{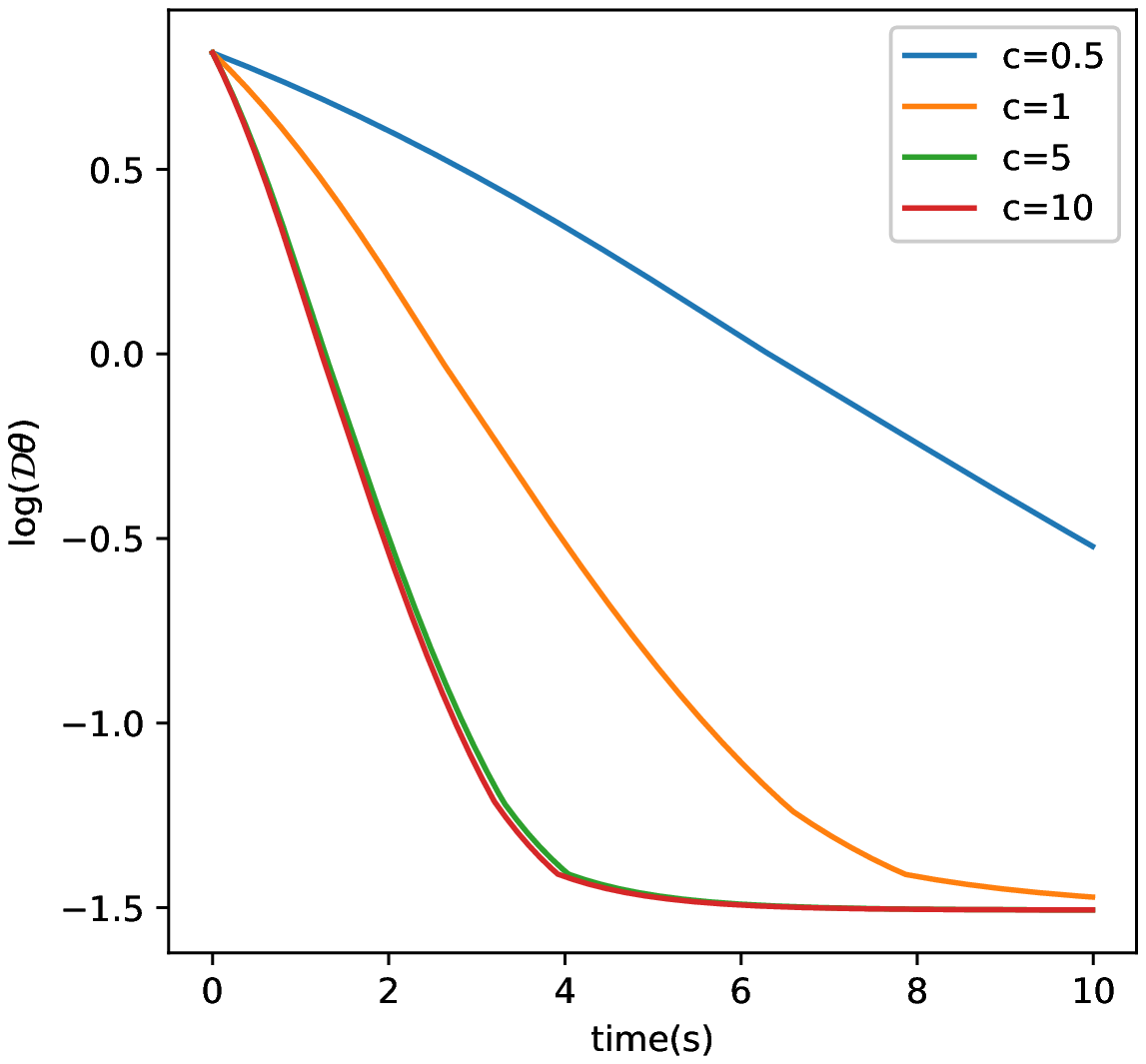}
	\end{subfigure}
	\begin{subfigure}{0.4\textwidth}
	\centering
	\includegraphics[scale=.5]{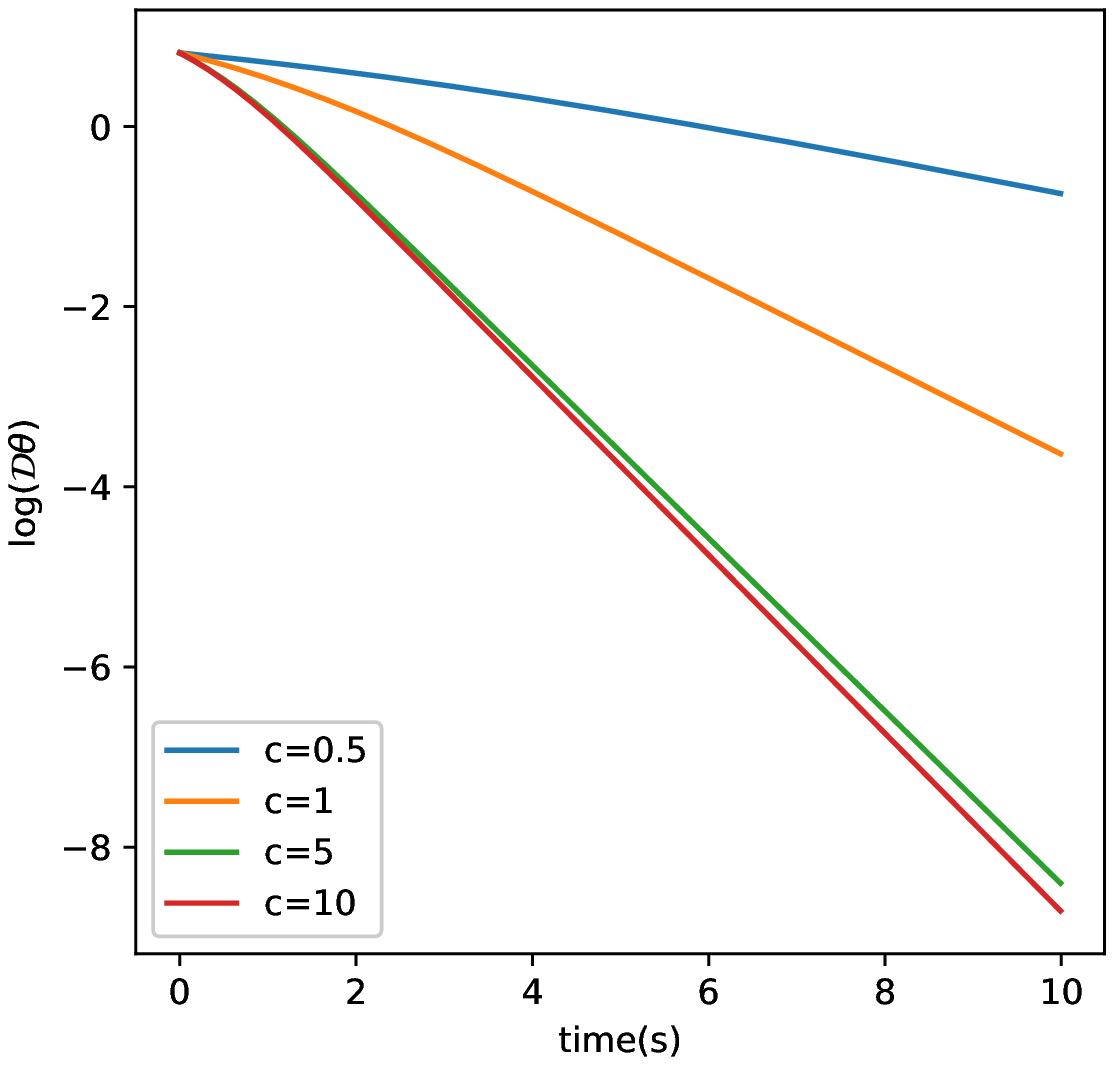}
	\end{subfigure}		
	\caption{time-evolution of $ \log \mathcal{D}({\Theta^1} )$ for $ c=0.5,1,5,10 $.}
	\label{Fig2}
\end{figure}

In Figure 2, we study the decay rate toward the phase-locked state due to the effect of speed of light $c$. The left plot shows the time-evolution of $ \log\mathcal{D}(\Theta) $ in a heterogeneous ensemble, and right plot is for the homogeneous ensemble: $ \nu_i=0 $. Each simulation was conducted for $ c = 0.5 ,1,5,10 $. For the heterogenous case, $  \Theta^1$ converges to phase-locked state with $ \mathcal{D}(\Theta^1)>0$, so $ \log D(\Theta^1) $ converges to certain value, whereas for the homogeneous case, $ \Theta^1 $ converges to a single point, so we can observe the exponential decay of $ \mathcal D(\Theta^1) $. 

The decay rate increases, as $ c $ increases and seems to converge to certain value. Note that the plot when $ c=5 $ and $ c=10 $ is almost identical so that the convergence of decay rate tends to that of the Kuramoto model, as $ c\to \infty$.


\subsubsection{A non-relativistic limit} \label{sec:5.2.2}
In this part, we perform a numerical study on the non-relativistic limit and compare them with analytical results in Theorem \ref{T5.1}. Again, we choose system parameters as follows:
\[
\Delta t = 0.01,\quad N=10, \quad \kappa=1,
\]
and natural frequencies and initial data are randomly chosen from the intervals $[-0.15, 0.15]$ and $[-(\pi-\theta_*)/2,(\pi-\theta_*)/2]$, respectively.

\begin{figure}[!h]
	\begin{subfigure}{0.3\textwidth}
		\centering
		\includegraphics[scale=.4]{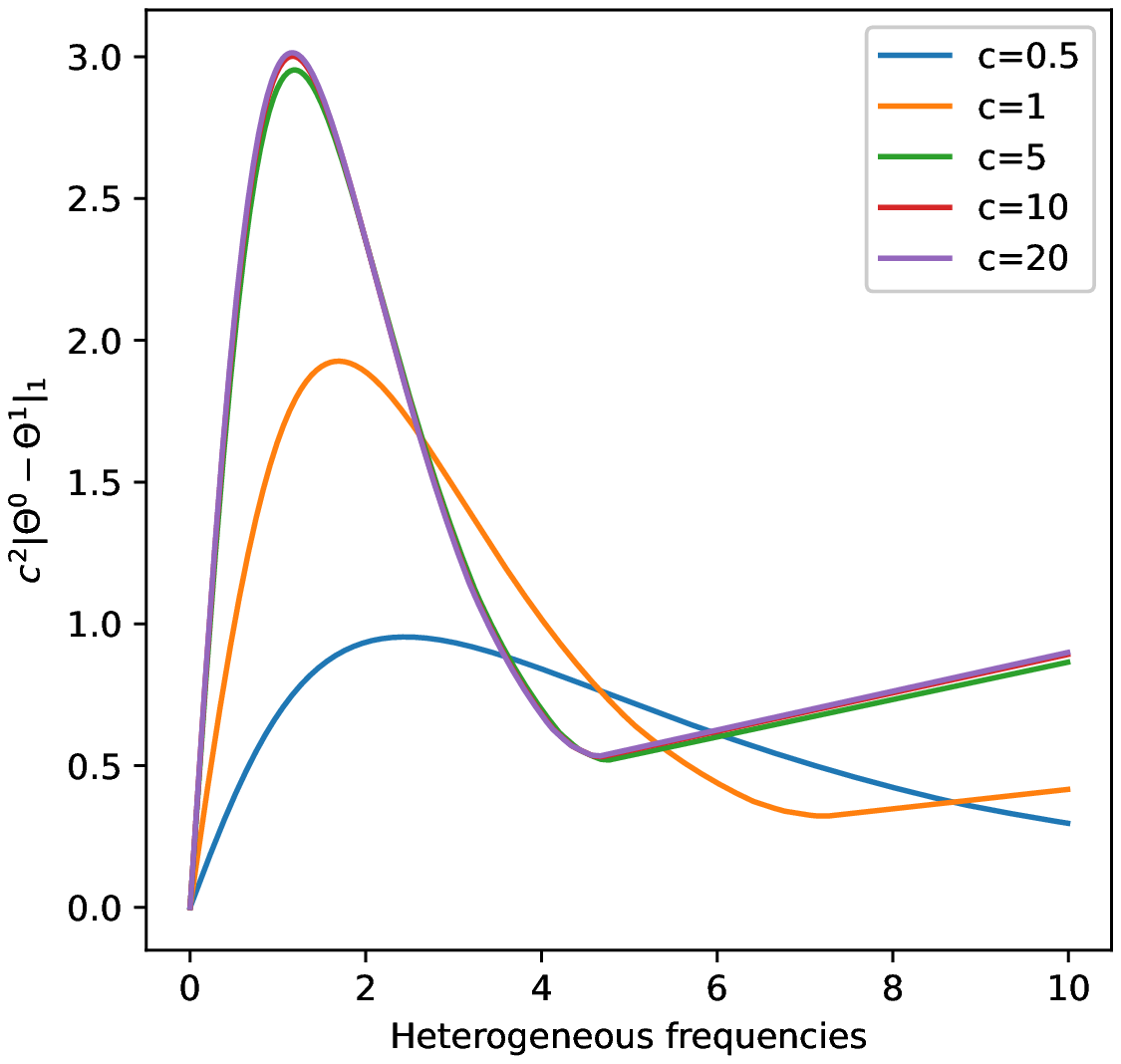}
	\end{subfigure}
	\begin{subfigure}{0.3\textwidth}
		\centering
		\includegraphics[scale=.4]{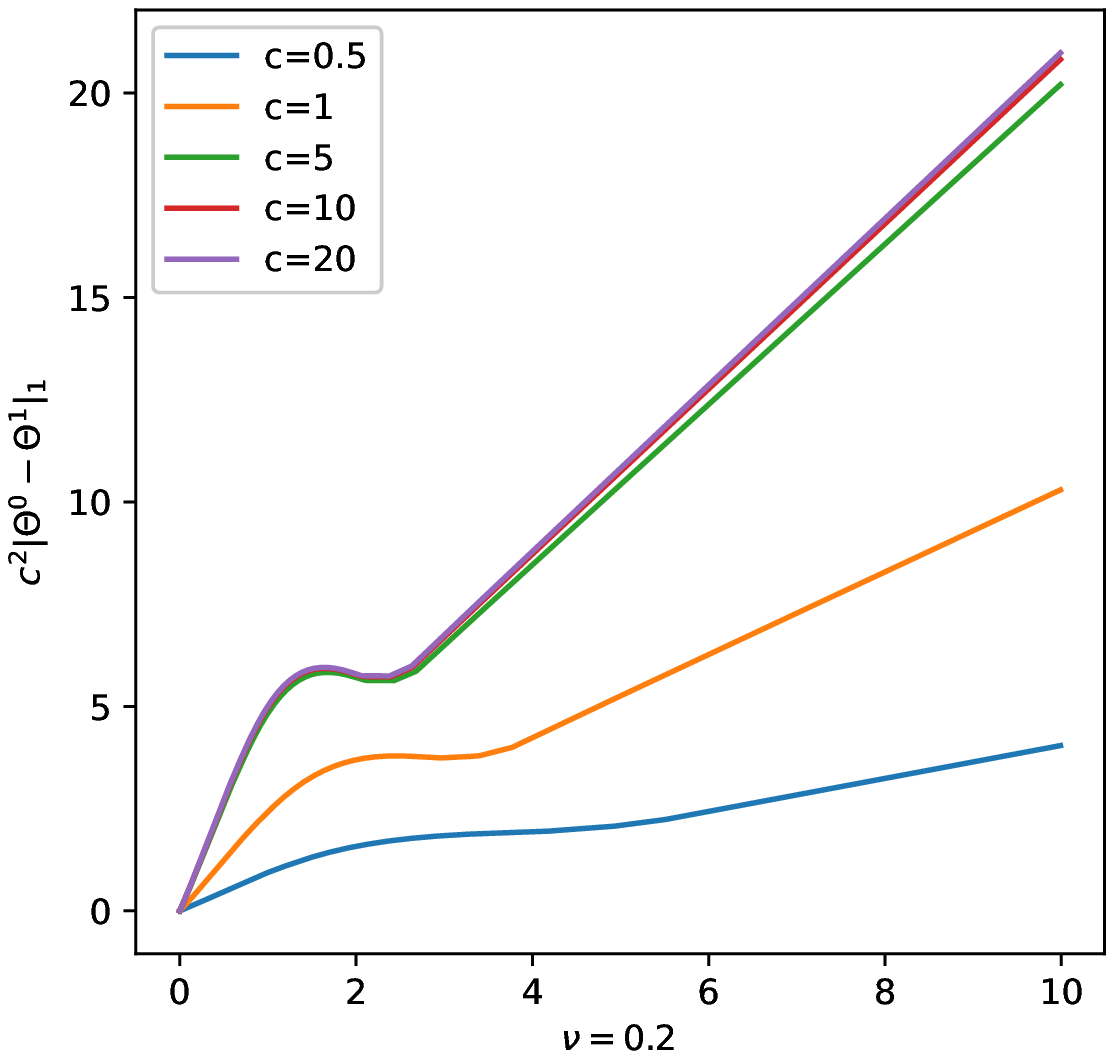}
	\end{subfigure}		
	\begin{subfigure}{0.3\textwidth}
	\centering
	\includegraphics[scale=.4]{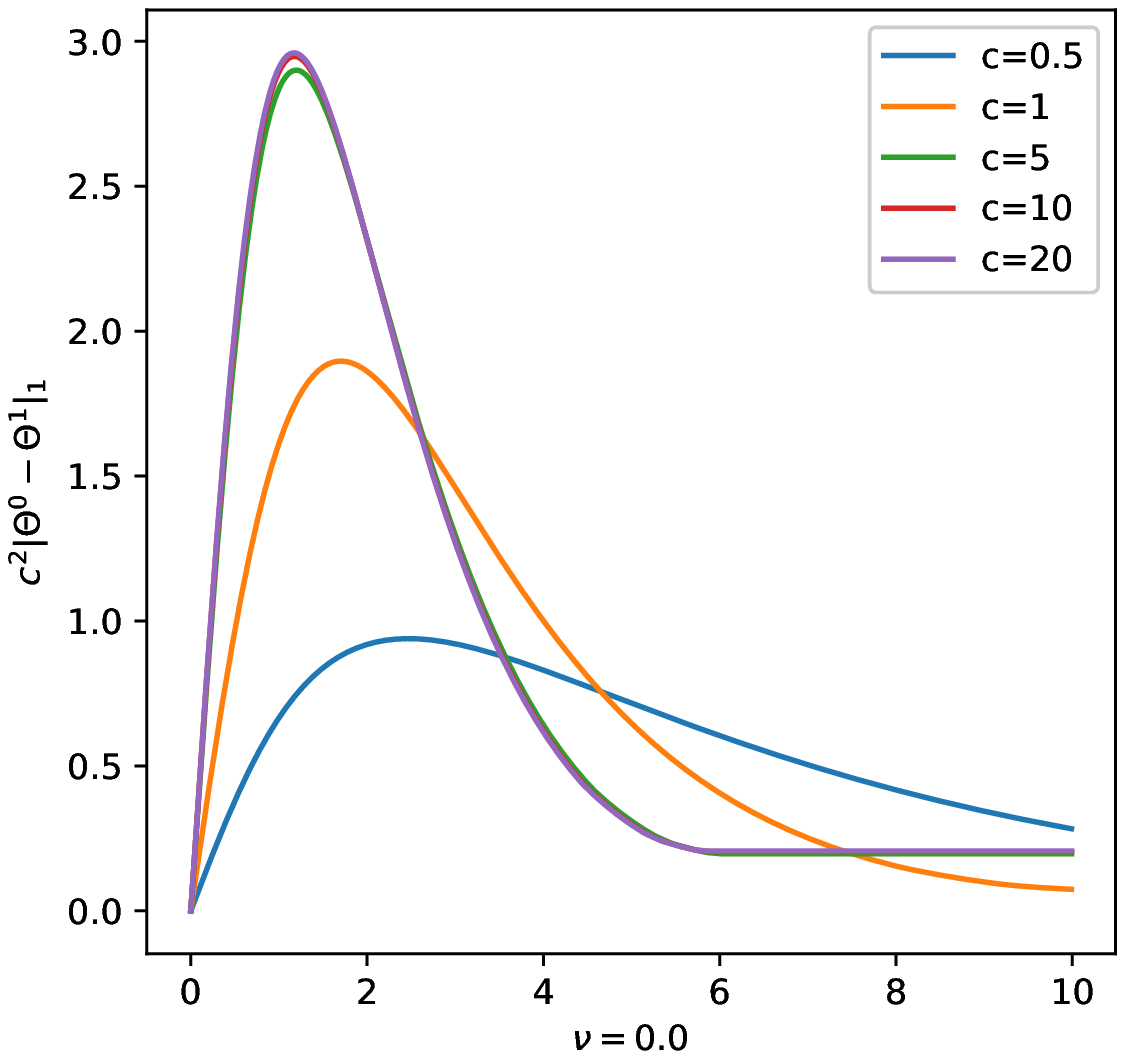}
\end{subfigure}		
	\caption{time-evolution of $ \log \mathcal{D}({\Theta^1} )$ for $ c=0.5,1,5,10 $.}
	\label{Fig3}
\end{figure}
In Figure 3, we study the temporal evolution of $\| \Theta^0 - \Theta^1 \|_1$ over time by changing the speed of light $c = 0.5, 1, 5, 10, 20$ and tries to see whether 
$c^2 \sup_{0 \leq t < T} \| \Theta^0(t) - \Theta^1(t) \|$ is order of ${\mathcal O}(T)$ or not. The first plot of Figure 3 is for the heterogeneous ensemble case, when natural frequencies are non-identical. The second plot is for a homogeneous ensemble case, when all natural frequencies are identical to $ 0.2 $. The last plot is when every natural frequencies are all zero.

The time-evolution of $c^2 \sup_{0 \leq t < \infty} \| \Theta^0(t) - \Theta^1(t) \|$ are almost identical for $ c=5, 10, 20 $. These plots show that the following estimatation holds:
\[  \sup_{0 \leq t \leq T}  \| \Theta^c(t) - \Theta^{\infty}(t) \|_1 \leq    \mathcal{O}(c^{-2}) T.      \]
One can observe that when natural frequencies are heterogeneous and natural frequencies are not equal to zero, the quanitty $c^2 \sup_{0 \leq t < \infty} \| \Theta^0(t) - \Theta^1(t) \|$ is not bounded. The numerical results suggest that $\ell^1 $ distance seems to increase linearly in time.  On the other hand, when all natural frequencies are equal to zero, $c^2 \sup_{0 \leq t < \infty} \| \Theta^0(t) - \Theta^1(t) \|_1$ seems to converge to certain positive value as $ t \to \infty $.
 
Quick observation of $ \sum \dot{\theta} $ suggests this numerical results. For a solution to the Kuramoto model,  $ \sum_{\theta_i \in \Theta^0} \dot{\theta_i}$ is always equal to $ \sum \nu $ where as in general case $  \sum_{\theta_i \in \Theta^1} \dot{\theta_i}$ will not be equal to $ \sum \nu $. Hence the phase-locked state will be slowly drift apart by velocity of $  \dfrac{1}{N} \big( \sum_{\theta_i \in \Theta^1} \dot{\theta_i}-\sum_{\theta_i \in \Theta^0} \dot{\theta_i} \big)$. But when every natural frequency are equal to zero, every agents in $ \Theta^1 $ will converge to certain point, and $ \dfrac{1}{N}\sum_{\theta_i \in \Theta^1} \dot{\theta_i} $ will be equal to zero, which can be verified analytically through Lemma \ref{L3.2}.

%
%
%

\section{Conclusion} \label{sec:6}
In this paper, we have proposed a generalized Kuramoto model for phase synchronization and investigated its emergent behaviors. Our proposed model is quite general enough to incorporate the relativistic Kuramoto model derived from the relativistic Cucker-Smale model on the unit sphere. As a first set of results, we provided several sufficient frameworks for the complete synchronization for homogeneous and inhomogeneous ensembles. Our sufficient frameworks are given in terms of some conditions on the initial data and system parameters such as the coupling strength and natural frequencies. Compared to the Kuramoto model, our frameworks are rather restrictive in the sense that the initial data for complete synchronization is not generic, although the proposed model can be written as a gradient-like system. Thus, it would be interesting whether the complete synchronization holds for generic initial data in a large coupling regime. Second, we studied a non-relativistic limit for the relativistic Kuramoto model by analyzing temporal evolution of $\ell^1$-discrepancy between the non-relativistic Kuramoto model and the relativistic Kuramoto model issued from the same initial configuration. Our rough bound for such an $\ell^1$-discrepancy is the order of $\mbox{size of time-interval } \times c^{-2}$. Thus, in a finite-time interval, we show that the $\ell^1$-discrepancy decays to zero at the order of $c^{-2}$ in a non-relativistic limit $c \to \infty$. Again, it would be very interesting to see whether this non-relativistic limit can be made uniformly in time at least for some admissible set of initial data.  We leave this interesting problem in a future work.

\newpage


\begin{thebibliography}{10}
\bibitem{A-B} Acebron, J. A., Bonilla, L. L., P\'{e}rez Vicente, C. J. P., Ritort, F. and Spigler, R.: \textit{The Kuramoto model: A simple paradigm for synchronization phenomena.} Rev. Mod. Phys. {\bf 77} (2005), 137-185.

\bibitem{AHKS} Ahn, H., Ha, S.-Y., Kang, M. and Shim, W.: \textit{Emergent behaviors of relativistic flocks on Riemannian manifolds.} Submitted. 


\bibitem{B} Barbalat, I.: \textit{Syst\'emes d\'equations diff\'erentielles doscillations non Lin\'eaires.} Rev. Math. Pures Appl. {\bf 4} (1959), 267-270.

\bibitem{B-C-M1} Benedetto, D., Caglioti, E. and Montemagno, U.: \textit{Exponential dephasing of oscillators in the kinetic Kuramoto model.} J. Stat. Phys. {\bf 162} (2016), 813-823.

\bibitem{B-C-M} Benedetto, D., Caglioti, E. and Montemagno, U.: \textit{On the complete phase synchronization for the Kuramoto model in the mean-field limit}. Commun. Math. Sci. {\bf 13} (2015), 1775-1786.

\bibitem{B-D-P}  Bronski, J., Deville, L. and Park, M. J.: \textit{Fully synchronous solutions and the synchronization phase transition for the finite-$N$ Kuramoto model.} Chaos {\bf 22} (2012), 033133.

\bibitem{B-B} Buck, J. and  Buck, E.: \textit{Biology of synchronous flashing of fireflies}. Nature {\bf 211} (1966), 562-564.

\bibitem{C-C-H-K-K} Carrillo, J. A., Choi, Y.-P., Ha, S.-Y., Kang, M.-J. and Kim, Y.: \textit{Contractivity of transport distances for the kinetic Kuramoto equation}. J. Stat. Phys. {\bf 156} (2014), 395-415. 

\bibitem {C-H-J-K} Choi, Y., Ha, S.-Y., Jung, S. and Kim, Y.:  \textit{Asymptotic formation and orbital stability of phase-locked states for the Kuramoto model.} Phys. D {\bf 241} (2012), 735-754.

\bibitem{C-S} Chopra, N. and Spong, M. W.: \textit{On exponential synchronization of Kuramoto oscillators}. IEEE Trans. Automatic Control {\bf 54} (2009), 353-357.

\bibitem{C-Sm} Cucker, F. and Smale, S.: \textit{Emergent behavior in flocks.} IEEE Trans. Automat. Control {\bf 52} (2007), 852-862.

\bibitem{D-M2} Degond, P. and Motsch, S.: \textit{Large-scale dynamics of the persistent Turing Walker model of fish behavior.} J. Stat. Phys. {\bf 131} (2008), 989-1022.

\bibitem{D-M3} Degond, P. and Motsch, S.: \textit{Continuum limit of self-driven particles with orientation
interaction.} Math. Mod. Meth. Appl. Sci. {\bf 18} (2008), 1193-1215.

\bibitem{D-M1} Degond, P. and Motsch, S.: \textit{Macroscopic limit of self-driven particles with orientation interaction.} C.R. Math. Acad. Sci. Paris {\bf 345} (2007), 555-560.


\bibitem{D-X} Dong, J.-G. and Xue, X.: \textit{Synchronization analysis of Kuramoto oscillators}. Commun. Math. Sci. {\bf 11} (2013), 465-480.

\bibitem{D-B1} D\"{o}rfler, F. and Bullo, F.: \textit{Synchronization in complex networks of phase oscillators: A survey.} Automatica {\bf 50} (2014), 1539-1564.

\bibitem {D-B} D\"{o}rfler, F. and Bullo, F.: \textit{On the critical coupling for Kuramoto oscillators.} SIAM. J. Appl. Dyn. Syst. {\bf 10} (2011), 1070-1099. 

\bibitem{H-J-K} Ha, S.-Y. and Jeong, E. and Kang, M.-J.: \textit{Emergent behaviour of a generalized Viscek-type flocking model.} Nonlinearity {\bf 23} (2010), 3139-3156.


\bibitem{H-J} Ha, S.-Y. and Jung, J.: \textit{A hybrid fractional Kuramoto model and its emergent behavior.} Preprint. 


\bibitem{H-K-R} Ha, S.-Y., Kim, H. W. and Ryoo, S. W.: \textit{Emergence of phase-locked states for the Kuramoto model in a large coupling regime}. Commun. Math. Sci.  {\bf 14} (2016), 1073-1091.

\bibitem{H-K-P-Z} Ha, S.-Y., Kim, J., Park, J. and Zhang, X.: \textit{Uniform stability and mean-field limit for the augmented Kuramoto model.} Netw. Heterog. Media {\bf 13} (2018), 297-322.

\bibitem{H-K-Rug-1} Ha, S.-Y., Kim, J. and Ruggeri, T.: \textit{From the relativistic mixture of gases to the relativistic Cucker-Smale flocking}. Arch. Ration. Mech. Anal. {\bf 235} (2020), 661-706.





\bibitem{H-P-R-S} Ha, S.-Y., Park, H., Ruggeri, T. and Shim, W.: \textit{Emergent behaviors of thermodynamic Kuramoto ensemble on a regular ring lattice.}  J. Stat. Phys. {\bf 181} (2020), 917-943.




\bibitem{Ku2} Kuramoto, Y.: \textit{International symposium on mathematical problems in mathematical physics}. Lecture Notes Theor. Phys. {\bf 30} (1975), 420.

\bibitem{La}  Lancellotti, C.: \textit{On the Vlasov limit for systems of nonlinearly coupled oscillators without noise}. Transport theory and statistical physics. {\bf 34}  (2005), 523-535.


%
%



\bibitem{Pe} Peskin, C. S.: \textit{Mathematical aspects of heart physiology}. Courant Institute of Mathematical Sciences, New York, 1975.

\bibitem{P-R} Pikovsky, A., Rosenblum, M. and Kurths, J.: \textit{Synchronization: A universal concept in nonlinear sciences}. Cambridge University Press, Cambridge, 2001.

%

%

\bibitem{St} Strogatz, S. H.: \textit{From Kuramoto to Crawford: exploring the onset of synchronization in populations of coupled oscillators.} Phys. D {\bf 143} (2000), 1-20.
		
\bibitem{T-T}  Toner, J. and Tu, Y.: \textit{Flocks, herds, and schools: A quantitative theory of flocking.} Phys. Rev. E {\bf 58} (1998), 4828-4858.

\bibitem{T-B} Topaz, C. M. and Bertozzi, A. L.: \textit{Swarming patterns in a two-dimensional kinematic model for biological groups.} SIAM J. Appl. Math. {\bf 65} (2004), 152-174.

\bibitem{V-W} van Hemmen, J. L. and Wreszinski, W. F.: \textit{Lyapunov function for the Kuramoto model of nonlinearly coupled oscillators.}
J. Stat. Phys. {\bf 72} (1993), 145-166.


\bibitem{VZ} Vicsek, T. and Zefeiris, A.: \textit{Collective motion.} Phys. Rep. {\bf 517} (2012), 71-140.
		

\bibitem{Wi1} Winfree, A. T.: \textit{The geometry of biological time}. Springer, New York, 1980.
		
	





\end{thebibliography}
\end{document}